%29 November 2010

\documentclass[a4paper,12pt]{amsart} 
\usepackage{amsmath,amsthm,amssymb}  

\usepackage{graphicx}

\setlength{\parskip}{6pt}
\setlength{\parindent}{10pt}

\numberwithin{equation}{section}

%theorems etc ("plain" style)                          
\theoremstyle{plain}
%numbered
\newtheorem{theorem}{Theorem}[section]
\newtheorem{proposition}[theorem]{Proposition}         
\newtheorem{corollary}[theorem]{Corollary} 
\newtheorem{lemma}[theorem]{Lemma} 
   
%not numbered
%\newtheorem{definition}{Definition}
%\renewcommand{\thedefinition}{\!\!}  

%theorems etc ("definition" style)  
\theoremstyle{definition}  
%numbered 
 
%not numbered
%\renewcommand{\theremark}{\!\!}  
\newtheorem{remark}{Remark}  
 
\newtheorem{notation}{Notation}

%Bbb
\newcommand{\C}{\mathbb C}   
\newcommand{\R}{\mathbb R}
\newcommand{\Z}{\mathbb Z}

\renewcommand{\P}{\mathbb P}

%\mathfrak
\newcommand{\su}{{\mathfrak s\mathfrak u}}
\renewcommand{\sl}{{\mathfrak s\mathfrak l}}

%Greek
\newcommand{\al}{\alpha}
\newcommand{\be}{\beta} 
\newcommand{\ga}{\gamma}
\newcommand{\de}{\delta}
\newcommand{\la}{\lambda}
\newcommand{\si}{\sigma} 
 
\newcommand{\La}{\Lambda}
\newcommand{\eps}{\epsilon}

%mathoperator
\DeclareMathOperator{\tr}{tr}

\DeclareMathOperator{\diag}{diag}

%mathcal

\newcommand{\calA}{\mathcal{A}}
\newcommand{\calB}{\mathcal{B}}
\newcommand{\calH}{\mathcal{H}}

%misc
\newcommand{\sub}{\subseteq}  
\newcommand{\es}{\emptyset}

\newcommand{\st}{\ \vert\ }   
\renewcommand{\ll}{\lq\lq}
\newcommand{\rr}{\rq\rq\ }
\newcommand{\rrr}{\rq\rq}

\renewcommand{\b}{\partial}

\newcommand{\bp}{\begin{pmatrix}} 
\newcommand{\ep}{\end{pmatrix}}

\renewcommand{\d}{ {T} }
\newcommand{\X}{{E}}

\newcommand{\bs}{
\left(
\begin{smallmatrix}
} 
\renewcommand{\es}{
\end{smallmatrix}
\right)
} 

\newcommand{\bsq}{
\left[
\begin{smallmatrix}
} 
\newcommand{\esq}{
\end{smallmatrix}
\right]
} 

\newcommand{\zbar}{  {\bar z}  }

\newcommand{\w}{  
\left(
\begin{smallmatrix}
 & \la \\
 \!-1\!/\!\la\  & 
\end{smallmatrix}
\right)
  }
  
  \newcommand{\bigw}{  
\left(
\begin{matrix}
 & \la \\
 \!-1\!/\!\la\  & 
\end{matrix}
\right)
  }

\newcommand{\sqrta}{
\text{\scriptsize$\sqrt a$}
}

\newcommand{\sqrtma}{
\text{\scriptsize$\sqrt {-a}$}
}

\newcommand{\sqrtaa}{
\text{\scriptsize$\sqrt{a+t+\bar t
\vphantom{t^t}
}$}
}

\newcommand{\sqrtmaa}{
\text{\scriptsize$\sqrt{-a-t-\bar t
\vphantom{t^t}
}$}
}

%Roman numerals
\makeatletter

\newcommand{\Rmnum}[1]
{\expandafter\@slowromancap\romannumeral #1@}
\makeatother
\renewcommand{\P}{P\Rmnum3 }
\newcommand{\PP}{P\Rmnum3}

\begin{document}     

\title[The $tt^\ast$ structure of $\C P^1$]{The $tt^\ast$ structure of the quantum cohomology of $\C P^1$ from the viewpoint of differential geometry}  

\author{Josef F. Dorfmeister, Martin A. Guest and Wayne Rossman}      

\date{}   

\maketitle 

\section{Introduction}\label{intro}

The quantum cohomology of $\C P^1$ provides a distinguished solution of the third Painlev\'e (\PP) equation.  S. Cecotti and C. Vafa  discovered this from a physical viewpoint (see \cite{CeVa91}, \cite{CeVa92}).  We shall derive this from a differential geometric viewpoint, using the theory of harmonic maps and in particular the generalized Weierstrass representation  (DPW representation) for surfaces of constant mean curvature.  The nontrivial aspects are the characterization of the solution, and its global behaviour.  As yet, no treatment (including ours) could be described as completely satisfactory, but we hope that our viewpoint provides additional insight. 

Mirror symmetry provides the context for this example:  whereas the quantum cohomology of a Calabi-Yau manifold corresponds to a variation of Hodge structure,  the quantum cohomology of a Fano manifold (such as $\C P^1$) should correspond to a variation of 
\ll semi-infinite Hodge structure\rr or
\ll non-commutative Hodge structure\rr (see \cite{Bar01}, \cite{He03}, \cite{KaKoPa07}).   
In both cases, the variation of Hodge structure can be described as a 
\ll $tt^\ast$ structure\rrr.  This originates from the physical notion of the ground state metric (Zamolodchikov metric) on a moduli space of supersymmetric field theories. It represents a fusion of topological (holomorphic) and anti-topological (anti-holomorphic) objects.
In differential geometric terms, a $tt^\ast$ structure
can be described as a certain kind of pluriharmonic map.   

In the language introduced by C. Hertling (see \cite{He03} and sections 10 and 11 of \cite{HeSe07}) the essential point is that the  quantum cohomology of $\C P^1$ gives rise to a TERP-structure which is pure and polarized.  Such structures arise naturally in singularity theory, and an independent approach to the pure and polarized property for the mirror partner of $\C P^1$ (and other Fano manifolds) has been given by C. Sabbah in \cite{Sa08}.  H. Iritani (\cite{IrXX}) described the result of Cecotti and Vafa for $\C P^1$ much more explicitly,  from the mirror symmetry viewpoint. Our approach constitutes yet another formulation: it says that the extended harmonic map remains entirely within a single Iwasawa orbit of the loop group $\La SU_{1,1}$. 

We shall now sketch in more concrete terms the necessary background information.  First of all,  it is well known that the (small) quantum cohomology of $\C P^1$ is a commutative algebra $\C[b,q]/(b^2-q)$ which specializes to the ordinary cohomology algebra
$\C[b]/(b^2)$  of $\C P^1$ when the value of the complex parameter $q$ is set equal to zero.  The quantum differential equation of $\C P^1$ is that given by the linear ordinary differential operator $(\la\b)^2 - q$, where $\la$ (often denoted by $\hbar$)  is a complex parameter and $\b=q\b/\b q$.  This can be regarded as a (consistent) linear system of two first-order operators,  which in turn can be regarded as a flat connection in the trivial bundle
\begin{align*}
\C\times\C^2\cong
H^2(\C P^1;\C)\times H^\ast(\C P^1;\C) \to
H^2(\C P^1;\C)
\cong\C
\end{align*}
(with a singularity at $q=0$).
In a suitable gauge, this can be identified with a connection whose flatness expresses the classical Gauss-Codazzi equations of a surface in (real) $3$-space;  to be precise, a spacelike surface of constant mean curvature (CMC) in Minkowski space $\R^{2,1}$.  Thus, the quantum cohomology of $\C P^1$ corresponds to a surface,  and we shall explain what this surface is.  

Of course this particular relation between the quantum cohomology of $\C P^1$ and a CMC surface in $\R^{2,1}$ is very special.  But it is a general principle (see \cite{Gu08}) that the quantum cohomology of any manifold corresponds to a pluriharmonic map into a symmetric space, and pluriharmonic maps into symmetric spaces may be treated by the same  loop group formalism.  The fact that a  $tt^\ast$ structure is a particular kind of pluriharmonic map was first observed by B. Dubrovin (\cite{Du93}); this was well known from the work of P. Griffiths for those $tt^\ast$ structures given by variations of polarized Hodge structure. Thus, pluriharmonic maps are the fundamental differential geometric objects here. In the case of 
$\C P^1$,  the pluriharmonic map is the Gauss map of the surface, and this Gauss map is a harmonic map into the hyperbolic disk.  

The quantum cohomology data mentioned above is holomorphic. In the theory of pluriharmonic maps it appears as the (normalized) potential in the DPW generalized Weierstrass representation, which is a holomorphic $1$-form with values in a complex loop algebra.  An appropriate choice of (non-holomorphic) gauge is necessary in order to obtain the Gauss-Codazzi equations of a surface.  This amounts to a choice of a real form of the loop algebra, and the obvious choice is the one associated to quantum cohomology with real coefficients.  However, there is still some ambiguity in the associated surface;  one obtains a family of surfaces related by \ll dressing transformations\rrr.  (This point is explained somewhat differently in section 11 of \cite{HeSe07} and in \cite{IrXX}.  From the loop group point of view there is a natural real structure of quantum cohomology.  However, from the variation of Hodge structure point of view, the dressing transformation ambiguity can be interpreted as an ambiguity of real structure.  This will be discussed more precisely in section \ref{comments}.)

The existence of a family of local $tt^\ast$ structures for  the quantum cohomology of $\C P^1$ follows easily from the Iwasawa decomposition.   However, the
observation of Cecotti and Vafa is that there is a distinguished global $tt^\ast$ structure whose domain of definition is maximal.  From a mathematical point of view this is surprising and nontrivial, but it can be established by brute force in the case of $\C P^1$, because the Gauss-Codazzi equations reduce to the \P equation, whose solutions have been studied deeply (see\footnote{A more direct proof has been given recently in \cite{GuLiXX}. This method also applies to the quantum cohomology of $\C P^n$ for $1\le n\le 4$ and several weighted projective spaces.
} \cite{MTW77}, \cite{FIKN06}).  Cecotti and Vafa single out this solution by physical arguments, and  Iritani obtains it by using K-theory and mirror symmetry.
It seems likely that a complete explanation of this  phenomenon will be of equal interest in surface theory, as the relation between the global properties of a CMC surface and its DPW potential is an active area of research.

For general background information on the DPW 
representation in surface theory we refer to \cite{DoPeWu98}, \cite{Do08}.  A survey of the loop group approach to harmonic maps can be found in \cite{Gu97}, and its relation to 
quantum cohomology in \cite{Gu08}.

The authors are very grateful to Claus Hertling and Hiroshi Iritani for discussing and explaining their work.  They thank Alexander Its for supplying the isomonodromy deformation arguments referred to in the proof of Theorem \ref{cv}, and for informing them about the paper \cite{ItNiXX}.  They thank Nick Schmitt for his guidance in creating the images in section \ref{global}. All three authors were partially supported by grants from the Japan Society for the Promotion of Science, and the second author also by the Alexander von Humboldt Foundation, at various stages of this research.  Comments from the referee were also very much appreciated.

\section{Spacelike CMC surfaces in $\R^{2,1}$}\label{cmc}

In this section we review the integrable systems approach to classical surface theory.  First we sketch some standard surface theory and the 
DPW generalized Weierstrass representation.  The case of spacelike CMC surfaces in $\R^{2,1}$ which we need here (see 
\cite{In00}, \cite{KoXX}, \cite{BrRoSc10}) is almost identical to the better known version for CMC surfaces in $\R^3$
(which can be found, for example, in \cite{Do08}).  

\noindent{\em Classical surface theory and the DPW representation}

We use the notation of section 3 of \cite{BrRoSc10}.
Let $f:U\to \R^{2,1}$ be a spacelike surface, where $U$ denotes an open subset of $\R^2=\C$  and $\R^{2,1}$ denotes $\R^3$ with the Minkowski inner product $(a,b)=a_1b_1+a_2b_2-a_3b_3$.  Spacelike means that the
induced metric on $U$ is positive definite.

In conformal coordinates $z=x+iy$, the classical surface data can be written
\begin{gather*}
g=4e^{2u}(dx^2 + dy^2)\\
H=\tfrac18 e^{-2u} (f_{xx} + f_{yy}, N)\\
Q=(N,f_{zz})
\end{gather*}
where $N$ is a unit normal field; $g$ is the induced metric, $H$  the mean curvature, and $Qdz^2$  the Hopf differential.
This data satisfies the Gauss-Codazzi equations
\begin{gather*}
u_{z\zbar} - H^2e^{2u}+\tfrac14\vert Q\vert^2 e^{-2u}=0
\\
Q_{\zbar}-2e^{2u}H_z=0.
\end{gather*}
Conversely, it is known that any solution of these equations defines a surface, up to rigid motion.

The CMC condition is $H= \text{constant}$, and then the second equation just says that $Q$ is holomorphic. The first equation has two remarkable properties: (a) when $H\ne 0$, it can be transformed (away from umbilic points, i.e.\ zeros of $Q$) into the sinh-Gordon equation; (b) from any spacelike CMC surface we obtain an $S^1$-family of spacelike CMC surfaces, because the first equation is the same when $Q$ is multiplied by a unit complex number.  The appearance of this \ll spectral parameter\rrr, and \ll soliton equations\rr such as the sinh-Gordon equation,
leads to the modern approach to surface theory which emphasizes loop groups as infinite-dimensional symmetry groups. 

\noindent{\em The zero curvature formulation}

Given a spacelike surface $f:U\to \R^{2,1}$, we obtain
at each point of $U$ an element of the natural symmetry group $SO_{2,1}$ by choosing a framing consisting of two orthonormal tangent vectors (by definition, spacelike) and a unit normal vector (timelike).
For calculations it is convenient to replace $SO_{2,1}$ by the locally isomorphic group $SU_{1,1}$.  We can regard the 
$S^1$-family of framings described above as a map  $F:U\to \La SU_{1,1}$ (called  an extended frame), where $\La SU_{1,1}$ (the loop group of $SU_{1,1}$) is the set of all smooth maps from $S^1$ to $SU_{1,1}$.  
Using the notation of \cite{BrRoSc10}, direct calculation gives 
$F^{-1}dF = \calA dz+\calB d\zbar$, where
\begin{equation}\label{MC}
\calA=
\tfrac12
\bp
u_z & - \tfrac1\la  2 i H e^u \\
\tfrac1\la  i Q e^{-u} & -u_z
\ep,
\ \ 
\calB=
\tfrac12
\bp
-u_{\bar z} & - \la   i \bar Q e^{-u} \\
2  i \la H e^{u} & u_{\bar z}
\ep,
\end{equation}
and $\la\in S^1$.
Given a basepoint $z_0\in U$, we can normalize $F$ (pre-multiply by an element of $\La SU_{1,1}$) so that
$F(z_0)=I$.  

We are regarding $SU_{1,1}$ as the real form of $SL_2\C$ given by the conjugate-linear involution
\[
C(A)=D\, ( {{\bar A}^t})^{-1} \, D,
\quad  D=\diag(1,-1),
\]
that is, $SU_{1,1}=
(SL_2\C)_C=\{A\in SL_2\C \st C(A)=A\}$. Similarly,
$\La SU_{1,1} = (\La SL_2\C)_C$ where
\[
C(\ga)(\la)=D(\overline{\ga(1/\bar\la)}^t)^{-1} D. 
\]
The twisted loop groups 
$(\La SL_2\C)_\si, (\La SU_{1,1})_\si$ are the subgroups of 
the loop groups $\La SL_2\C, \La SU_{1,1}$
defined by imposing the condition 
$\si(\ga)(\la)=\ga(-\la)$, i.e.\ they are the fixed points of
the involution
\[
\si(\ga)(\la)=
D\ 
\ga(-\la)
\ D.
\]
With this terminology, the map $F$ takes values in $(\La SU_{1,1})_\si$, and
$F^{-1}dF$ is a $1$-form with values in the twisted real loop algebra
$(\La \su_{1,1})_\si$.   

Conversely, let  $\al=\calA dz + \calB d\bar z$ be any $1$-form  on a {\em simply-connected} domain $U_0$ with values in 
$(\La \su_{1,1})_\si$ such that $\calA$ is linear in $1/\la$ and $\calB$ is linear in $\la$, and such that $d\al +\al\wedge\al=0$. 
The zero curvature condition $d\al +\al\wedge\al=0$ implies that there exists a map 
$F:U_0\to (\La SU_{1,1})_\si$ such that $\al=F^{-1}dF$. This $F$ is unique if we insist that 
$F(z_0)=I$.   The other conditions imply (see the end of this section)  that $\calA,\calB$ can be expressed in the above explicit
 form, for some $u, Q, H$. 
 By regarding $F\vert_{\la=1}$ as an orthonormal frame,  and then integrating,  we obtain a spacelike CMC surface in $\R^{2,1}$.
 
There is a direct way to obtain the surface $f$ from $F$:
\begin{equation}\label{sym}
f=
- \tfrac{i}{2 H} 
 \left(
 F D F^{-1} 
+ 2 \lambda (\partial_\lambda F) F^{-1} 
\right)_{\lambda=1}
\end{equation}
where $\R^{2,1}$ is regarded as the Lie algebra $\su_{1,1}$.
This is known as the Sym-Bobenko formula.
 
The discussion above is the \ll zero curvature formulation\rr of the equations for spacelike CMC surfaces in $\R^{2,1}$.  It is also the zero curvature formulation of the
equations for harmonic maps from $U_0$ to the symmetric space $SU_{1,1}/(SU_{1,1})_\si$,  where $(SU_{1,1})_\si \cong
S(U_1\times U_1)$  is the diagonal subgroup of $SU_{1,1}$.  (This symmetric space may be identified with the open unit disk in $\C$ with its hyperbolic metric.) 
Such harmonic maps may be regarded as the Gauss maps of 
spacelike CMC surfaces. The Gauss map determines the surface
up to translation in 
$\R^{2,1}$, when $H\ne 0$. 

\noindent{\em The generalized Weierstrass (DPW) representation}

The main benefit of the zero curvature formulation is that it leads directly to the local solution of the equations.  The definition of $F$ implies that it gives a holomorphic map $[F]$ from $U_0$ to 
\[
(\La SU_{1,1})_\si/(SU_{1,1})_\si,
\]
which is an open subset of the infinite-dimensional generalized flag manifold
\[
(\La SL_2\C)_\si/(\La_+ SL_2\C)_\si.
\]
Here, 
$\La_+ SL_2\C$ denotes the subset of $\La SL_2\C$ consisting of maps $S^1\to SL_2\C$ which extend holomorphically to the interior of $S^1$. 
The flag manifold has an open {\em dense} subset (the \ll big cell\rrr) represented by the similarly defined\footnote{In this article $\La_- SL_2\C$ denotes the subset of $\La SL_2\C$ consisting of maps $S^1\to SL_2\C$ which extend holomorphically to the exterior of $S^1$ in the Riemann sphere, and which are of the form $I+O(1/\la)$.}
complex affine space $(\La_-SL_2\C)_\si$.  Since $F(z_0)=I$,
on some (possibly smaller) open neighbourhood $U_0$ of $z_0$ we can write $F=F_- F_+$  (the Birkhoff factorization of $F$).
The map $F_-:U_0 \to (\La_- SL_2\C)_\si$
is a holomorphic function of $z$ which represents the
holomorphic map to the flag manifold $(\La SL_2\C)_\si/(\La_+ SL_2\C)_\si$, i.e.\ $[F_-]=[F]$. 
From
\[
F^{-1}dF = F_+^{-1}(F_-^{-1}dF_-)F_+   +
F_+^{-1} dF_+
\]
we see that the expansion of $F_-^{-1}dF_-$ contains no terms of the form $\la^i$ with $i<-1$;  on the other hand it contains only terms of the form $\la^i$ with $i\le -1$ because
$F_-=I+O(1/\la)$. Therefore,
\[
F_-^{-1}dF_-
= 
\tfrac1\la
\bp
 & p_1 \\
 p_2 & 
 \ep
 dz
\]
for some holomorphic functions  $p_1,p_2$ on $U_0$.   We call $F_-$ the  complex extended frame, and $F_-^{-1}dF_-
$ the DPW potential.  
 
Conversely, let $p_1,p_2$ be holomorphic functions on a
simply-connected open neighbourhood $U_0$ of $z_0$. 
Then there exists a unique holomorphic map 
$L:U_0\to (\La_- SL_2\C)_\si$ such that 
\[
L^{-1}dL=
\tfrac1\la
\bp
 & p_1 \\
 p_2 & 
 \ep
 dz
\]
and $L(z_0)=I$.   
Since $L(z_0)\in (\La SU_{1,1})_\si$ and 
$(\La SU_{1,1})_\si/(SU_{1,1})_\si$ is 
an open subset of 
$(\La SL_2\C)_\si/(\La_+ SL_2\C)_\si$, 
on some open neighbourhood of $z_0$ we can write $L=FB$
(the Iwasawa factorization of $L$),
where $F,B$ take values, respectively, in
$(\La SU_{1,1})_\si$, $(\La_+ SL_2\C)_\si$.
Then $F$ is an extended frame in the above sense with $L=F_-$, $B^{-1}=F_+$.  

This is the DPW generalized Weierstrass representation
for spacelike CMC surfaces in $\R^{2,1}$:  on sufficiently small domains, such surfaces correspond to pairs $(p_1,p_2)$ of holomorphic functions.  

\noindent{\em Summary}

The reader who is not familiar with surface theory (or who prefers  different conventions to those of \cite{BrRoSc10}) may regard the above discussion purely as motivation,  as we shall now summarize the formulae that will actually be used. 

For this, we need more information about the orbits of
the action of $(\La SU_{1,1})_\si$ on 
$(\La SL_2\C)_\si/(\La_+ SL_2\C)_\si$.  These orbits (see \cite{Ke99}) may be parametrized by a discrete set of elements $w$ of  $(\La SL_2\C)_\si$.  If $L$ takes values in the orbit of $w$, we have an Iwasawa factorization of the form $L=FwB$. So far we have used only the (open) orbit of $w=I$.  It turns out (section 4.5 of \cite{Ke99})
that there are precisely two open orbits, given by $w=I$ and 
\[
w=\bigw.
\]
For these two values of $w$, the Iwasawa factors $F,B$ are unique if we take $B$ to be of the form
$\diag(k,k^{-1})+O(\la)$ with $k=k(z,\zbar)>0$.  

Let us consider any smooth $(\La \su_{1,1})_\si$-valued connection form
\begin{equation}\label{GMC}
\al=
\bp
a & \tfrac1\la b \\
\tfrac1\la c & -a
\ep
dz+
\bp
-\bar a & \la \bar c \\
\la \bar b & \bar a
\ep
d\bar z,
\end{equation}
where $a, b, c$ are smooth functions which do not depend on $\la$.
This satisfies $d\al +\al\wedge\al=0$ if and only if the functions $a,b,c$ satisfy
\begin{gather*}
c_{\bar z} + 2\bar a c = 0\\
b_{\bar z} - 2\bar a b = 0\\
a_{\bar z} + \bar a_{z} - b\bar b + c \bar c = 0.
\end{gather*}
These equations may be interpreted as the Gauss-Codazzi equations of a spacelike CMC surface, in the following way.

First, let us suppose\footnote{Note that $w^{-1} (\La \su_{1,1})_\si w\sub 
(\La \su_{1,1})_\si$, so
$(Fw)^{-1} d(Fw)$ takes values in 
 $(\La \su_{1,1})_\si$, as $F^{-1}dF$ does.} that $\al$ arises as
$\al=(Fw)^{-1} d(Fw)$, for an  Iwasawa factorization
\[
L=FwB,\ 
w\in \{ I, \w \}
 \]
of an $(\La SL_2\C)_\si$-valued map 
$L$ such that
\[
L^{-1}dL =\tfrac1\la
\bp  &  p_1  \\
p_2 & 
\ep
dz,
\]
for some given holomorphic functions $p_1,p_2$.  
By comparing  coefficients of $1/\la$ in the
formula
\[
L^{-1}dL = B^{-1} (Fw)^{-1} d(Fw) B +
B^{-1} dB,
\]
we see that $b=p_1k^2$ and $c=p_2/ k^2$ (in particular, $bc=p_1p_2$), where $B=\diag(k,k^{-1})+O(\la)$ with $k>0$.
By substituting $a=\bar b_z/(2\bar b)$ and $c=p_1p_2/b$ into
$a_{\bar z} + \bar a_{z} - b\bar b + c \bar c = 0$,  we obtain
\[
(\log \vert b\vert)_{z \bar z} - \vert b\vert^2  +  \vert p_1p_2\vert^2
\vert b\vert^{-2} =0.
\]
These are the Gauss-Codazzi equations of a surface whose metric $g=4e^{2u}(dx^2 + dy^2)$, (positive) constant mean curvature $H$, and Hopf differential $Qdz^2$ satisfy
\[
\vert b\vert = e^u H,\quad
Q=2p_1p_2/H
\]
(as well as $b=p_1k^2$ and $c=p_2/ k^2$).   

In the next section we shall take $p_1=1$, $p_2=1/z$.  For this case,  we may define $u$ and $Q$ by
\[
e^u=k^2/H,\quad Q=2/(zH).
\]
(Later on, for convenience, we shall 
choose the specific value $H=\tfrac12$.)  We obtain, therefore, a specific spacelike CMC surface.

In order to match this with the previous discussion, we note that, with our choice of $u, H, Q$, the connection form (\ref{GMC}) becomes
\[
\al=
\tfrac12
\bp
u_z &  \tfrac1\la  2  H e^u \\
\tfrac1\la   Q e^{-u} & -u_z
\ep
dz
+
\tfrac12
\bp
-u_{\bar z} &  \la  \bar Q e^{-u} \\
2   \la H e^{u} & u_{\bar z}
\ep
d\bar z.
\]
This is $\diag(1,-i) (\calA dz + \calB d\bar z)\diag(1,-i)^{-1}$ where $\calA,\calB$ are given in (\ref{MC}).  
Our surface is given explicitly by replacing $F$ in the 
Sym-Bobenko formula (\ref{sym}) by 
$\diag(1,i)Fw\diag(1,i)^{-1}$. 

\begin{remark} The $1$-form $\al$ in (\ref{GMC}) does not agree exactly with that in (\ref{MC}) because various arbitrary choices were made in (\ref{MC}).   To obtain an exact match we can replace the condition \ll $k>0$\rr in the Iwasawa factorization by the condition \ll $ip_1k^2/H>0$\rrr; then we may introduce a real-valued function $u$ and a holomorphic function $Q$ by defining
$ip_1k^2=He^u$,  $Q=2p_1p_2/H$. 
\end{remark}

\section{Quantum cohomology}\label{qcoh}
 
The simplest Fano manifold is $\C P^1$, and its
 (small) quantum cohomology was one of the first examples to be computed.  With respect to the standard basis $1,b$ of $H^\ast(\C P^1;\C)\cong \C^2$, the known quantum products  $b\circ 1=b$, $b\circ b=q$ give rise to the Dubrovin/Givental connection
\[
d+\tfrac1\la
\bp
 & q\\
 1 &
 \ep \tfrac{dq}{q}
 =
 d+\tfrac1\la
\bp
 & 1\\
 1/q &
 \ep 
 dq
 \]
 in the trivial bundle $\C^\ast \times \C^2$.  

Thus, the main purpose of this article will be to investigate the harmonic map (or spacelike CMC surface) corresponding to the  DPW potential
 \[
 \eta=
\tfrac1\la
\bp
 & 1\\
1/z &
 \ep dz.
 \]
No knowledge of quantum cohomology is required for this, but we shall indicate in this section how quantum cohomology provides the appropriate
Lie-theoretic context.
 
The matrix of the Poincar\'e intersection form $(\ ,\ )$ on $H^\ast(\C P^1;\C)$  is
\[
P=
\bp  & 1\\
1 & 
\ep.
\]
The quantum product satisfies
$(a\circ b,c)=(b,a\circ c)$, which says that $\eta$  takes values in the twisted loop algebra 
$(\La \sl_2\C)_\si$, where\footnote{The expressions involving $D$ here follow from the fact that,
for $2\times 2$ matrices $A$, we have
$(A^t)^{-1} = (DP) A  (DP)^{-1}$ if $\det A=1$ and
$-A^t = (DP) A  (DP)^{-1}$ if $\tr A=0$.}
$\si$ is the involution of $\sl_2\C$ given by 
$\si(A)=-P A^t P = DAD$ and $D=\diag(1,-1)$.
The corresponding involution of $SL_2\C$ is 
$\si(A)=P (A^t)^{-1} P = D A D$.

The quantum product is weighted homogeneous, in the sense that $\vert a\circ b\vert = \vert a\vert + \vert b\vert$,  with respect to the degrees $\vert 1\vert=0, \vert b\vert=2, \vert q\vert=4$.  This is responsible for the following 
\ll homogeneity property\rr of $\eta$:
\[
\eta(\eps^2 z,\eps\la)=
\d(\eps)^{-1}\
\eta(z,\la)
\ \d(\eps),
\quad
\d(\eps)=\diag(1,\eps)
\]
for all unit complex numbers $\eps\in U_1=S^1$.

We take $U=\C^\ast=\C-\{0\}$.
To find a complex extended frame we have to solve the complex  o.d.e.\ $L^{-1}dL = \eta$.
The point $z=0$ plays an important role in quantum cohomology, but we cannot expect  $L$ to be single-valued on $\C^\ast$, and we cannot expect it to satisfy $L(0)=I$.  However, there is a canonical  solution of the form
\begin{align*}
 L&=\exp\!
 \tfrac1\la\!\!
\bp
 0 & 0\\
 \log z & 0
 \ep
 \ 
 L_0\\
 &=
 e^{tN/\la}L_0,
 \ \ \text{with}\ 
 N=\bp 0 & 0\\ 1 & 0 \ep
 \ \text{and}\  \  t=\log z,
\end{align*}
where $L_0(0)=I$.  (This $L_0$ has a natural interpretation in terms of Gromov-Witten invariants; see section 5.4 of
\cite{Gu08}.) 

An explicit formula for $L_0$ can be obtained as follows.  Since
$dL^t  (L^t)^{-1}= \eta^t$,  $L^t$ is a fundamental solution matrix for the system
\[
\la\b
\bp
\phi_0 \\ \phi_1
\ep
=
\bp 
 & 1\\
 z & 
 \ep
 \bp
\phi_0 \\ \phi_1
\ep
\]
where $\b = zd/dz$.  The equivalent second-order o.d.e.\ satisfied by $\phi_0$ is $(\la\b)^2\phi_0=z\phi_0$.  The  Frobenius method gives a natural basis $\phi,\tilde\phi$ of solutions near the regular singular point $z=0$ of this o.d.e.\ of the form
\begin{align*}
\phi&=f_0, \quad f_0(0)=1\\
\tilde\phi&=\tfrac1\la f_0\log z + f_1,\quad f_1(0)=0
\end{align*}
where
\[
f_0(z)= \sum_{i\ge 0} \frac{z^i}{(i!)^2 \la^{2i}},
\quad
f_1(z)= -\frac2\la 
\sum_{i\ge 1} (1+\cdots+ \frac1i)\frac{z^i}{(i!)^2 \la^{2i}}.
\]
This gives
\[
L=
\bp
\phi  &  \la\b \phi \\
\tilde\phi  &  \la\b\tilde\phi
\ep
\\
=
\bp
1 & 0\\
\tfrac1\la \log z & 1
\ep
\bp
f_0 &  \la\b f_0 \\
f_1  &  f_0+\la\b f_1
\ep
\ \overset{\text{def}}{=}\ 
e^{tN/\la}\,
L_0,
\]
from which the stated properties of $L_0$ follow.  Furthermore, it can be seen that the series for $L_0$ converges everywhere in $\C$.
 
In order to construct a harmonic map, we need a real form of the loop group $\La SL_2\C$.  We choose that given by the conjugate-linear involution 
\begin{equation}\label{real}
C(A)=P\bar A P = D\, ( {{\bar A}^t})^{-1}  \, D
\end{equation}
of  $SL_2\C$,
where the bar denotes complex conjugation with respect to the real form $H^\ast(\C P^1,\R)$ of $H^\ast(\C P^1,\C)$.
Thus, the relevant  loop group is $(\La SU_{1,1})_\si$, and the theory of the previous section applies. We obtain a harmonic map  to the symmetric space $SU_{1,1}/S(U_1\times U_1)$, and  a spacelike CMC surface in $\R^{2,1}$, whose Gauss map is this harmonic map. 

Now, a (metric) $tt^\ast$ structure on a domain in $\C^r$ is simply a pluriharmonic map from that domain to the symmetric space 
$GL_{2r}\R/O_{2r}$.  We refer to 
\cite{Du93}, \cite{CoSc05}, \cite{Sc05}, \cite{GuKuXX} for a full explanation of this, and in particular the relation with variations of polarized Hodge structure.  We have
$SU_{1,1}/S(U_1\times U_1)\cong SL_2\R/SO_2$ (the unit disk can be identified with the upper half plane),  and $SL_2\R/SO_2$ is a totally geodesic submanifold of $GL_2\R/O_2$, so the quantum cohomology of $\C P^1$ gives  a $tt^\ast$ structure on some domain $U\sub\C$.

The nontrivial aspect of this $tt^\ast$ structure, and the main content of this paper, concerns the nature of the domain $U$, {\em which should surround the singular point $z=0$ and be as large as possible.}  In particular, on this domain, $L$ should map into just one orbit of the Iwasawa decomposition.
The general theory (so far) does not say anything about this, as $L$ contains $\log z$ and cannot be normalized as $I$ at $z=0$. However,  Cecotti and Vafa argued
(\cite{CeVa91}, \cite{CeVa92}) that

\noindent(i)  it is possible to take $U=\C^\ast$, and 

\noindent(ii) on this domain the (multi-valued) harmonic map has the same homogeneity property as quantum cohomology.

\noindent To be precise, these apply not to $L$ but to a translate $\ga_0^{-1}L$ of $L$, where $\ga_0$ is a constant loop ($L^{-1}dL$ is unaffected by this).

We shall give precise statements and proofs of (i), (ii) in the next two sections.
Our main results are Theorem \ref{main}, which produces a family of loops $\ga_0$ (depending on a parameter $a$) such that the Iwasawa factorization $\ga_0^{-1}L=FwB$ is possible \ll locally\rrr, 
and Theorem \ref{cv}, which says that the factorization is 
possible \ll globally\rr for a certain specific value of $a$.  This value of $a$ gives the required $tt^\ast$ structure.

Let us outline here the plan of the proofs of these theorems.  
It is convenient to write
\[
\ga_0^{-1} L = EL_0,\quad 
E=\ga_0^{-1}\  e^{tN/\la}
\]
because the local problem turns out to depend only on $E$.

\noindent {\em First step:}  The Iwasawa factorization of $\X$ may be carried out easily and explicitly.  We use this to find $\ga_0$: assuming the existence of a homogeneous  Iwasawa factorization of $\X$ near $z=0$ imposes strong conditions on $\ga_0$ (Lemma \ref{h}).  For such $\ga_0$ we give the Iwasawa factorization of $\X$ in Proposition \ref{Z}.  Since $\X$ is a good approximation to $\X L_0$ near $z=0$, this allows us to deduce the existence of a homogeneous  Iwasawa factorization of $\X L_0$ near $z=0$ (Theorem \ref{main}).

\noindent {\em Second step:} To prove  the existence of a \ll global\rr homogeneous  Iwasawa factorization of $\X L_0$ for a particular value of the parameter $a$ (Theorem \ref{cv}), we appeal to a uniqueness result from the theory of Painlev\'e equations.  This argument is made possible by two fortuitous observations. First, under the homogeneity assumption,  the Gauss-Codazzi equations (of which our CMC surface is a solution) reduce to the
Painlev\'e \Rmnum3 equation. The family of solutions to this equation given by $\ga_0$ has already been studied in great detail, and it contains a certain global solution which is characterized by its asymptotic behaviour as $z\to 0$. This asymptotic behaviour is known explicitly for our family of solutions, so we can identify one of our solutions with the known global solution.

\section{First step: A family of CMC surfaces}\label{homogeneity}

Recall that $L=e^{tN/\la}L_0$ where $L_0$ is holomorphic for all $z\in\C$ and satisfies  $L_0(0)=I$, and where $t=\log z$,  $N=
\left(
\begin{smallmatrix}
0 & 0\\
1 & 0
\end{smallmatrix}
\right)
$. 

\begin{theorem}\label{main}  For any $a>0$,  consider the following element $\ga_0$ of $(\La SL_2\C)_\si$:
\[
\ga_0(\la)=
\bp
1/\sqrta   &  -\la/\sqrta  \\
0  &  \sqrta
\ep
=
\bp 1 & \\  & \la \ep^{-1}
\bp
1/\sqrta   &  -1/\sqrta  \\
0  &  \sqrta
\ep
\bp 1 & \\  & \la \ep.
\]
Then

\noindent(a) $\ga_0^{-1}L$ admits an Iwasawa factorization
\[
\ga_0^{-1}L=
F
\w
B
\]
on some domain $U=V\cap(\C-(-\infty,0])$, where $V$ is a neighbourhood of $z=0$ in $\C$, and

\noindent(b) $B$ is homogeneous, i.e.\
$
B(\eps^2 z,\eps\la)=
\d(\eps)^{-1}
B(z,\la)
\d(\eps)
$
 for all $\eps\in S^1$.
(This implies that $\al=(Fw)^{-1}d(Fw)$ is homogeneous.)

\noindent(c)
$B=\diag(k,k^{-1})+O(\la)$, where
$k$ is equal to 
$\sqrtmaa$ times a function which approaches $1$ as $z\to 0$. 
\end{theorem}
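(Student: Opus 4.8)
The plan is to prove the three assertions in the following order: first establish the local Iwasawa factorization of the ``leading term'' $\X = \ga_0^{-1}e^{tN/\la}$ together with its homogeneity, then transfer this to $\ga_0^{-1}L = \X L_0$ by a perturbation argument near $z=0$, and finally read off the asymptotics of $k$. Since $L_0$ is holomorphic on all of $\C$ with $L_0(0)=I$, the map $\ga_0^{-1}L$ differs from $\X$ by right multiplication by something close to the identity near $z=0$; openness of the big Iwasawa cell (and of the orbit of $w$) will make the factorization of $\X L_0$ follow from that of $\X$ on a possibly smaller neighbourhood $V$ of $0$, intersected with the cut plane $\C-(-\infty,0]$ so that $t=\log z$ is single-valued.

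For the factorization of $\X$ itself: I would compute $\X = \ga_0^{-1}e^{tN/\la}$ explicitly using the given formula for $\ga_0$. Because $\ga_0(\la)=\diag(1,\la)^{-1}\,\ga_0^{\#}\,\diag(1,\la)$ with $\ga_0^{\#}$ a constant ($\la$-independent) matrix, and $e^{tN/\la}$ has a similar conjugation structure ($t$ enters only through $t/\la$), the candidate Iwasawa factors should be writable in a form compatible with the homogeneity substitution $z\mapsto\eps^2 z$, $\la\mapsto\eps\la$, which sends $t=\log z \mapsto t + 2\log\eps$ and conjugates $\X$ by $\d(\eps)$ up to the constant loop $\ga_0$ absorbing the shift. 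Concretely, one expects $B$ to be determined by requiring $F = \X w^{-1} B^{-1}$ to lie in $(\La SU_{1,1})_\si$, i.e.\ to be fixed by the involution $C$; this is a finite linear-algebra computation once one knows the $\la$-expansion of $\X w^{-1}$. The normalization $B=\diag(k,k^{-1})+O(\la)$ with $k>0$ pins $B$ (and hence $F$) uniquely. The homogeneity of $B$ in part (b) then follows because both $\X$ and the constraint ``$C(F)=F$, $B$ normalized'' are equivariant under the $\d(\eps)$-conjugation combined with the $t\mapsto t+2\log\eps$ shift that $\ga_0$ is designed to absorb; uniqueness of the normalized factorization upgrades this equivariance to the stated identity $B(\eps^2 z,\eps\la)=\d(\eps)^{-1}B(z,\la)\d(\eps)$. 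That $\al=(Fw)^{-1}d(Fw)$ is then homogeneous is immediate from $\al = B\,(L^{-1}dL)\,B^{-1} - dB\,B^{-1}$ (with $L^{-1}dL=\eta$ already homogeneous) — wait, more precisely from $(Fw)^{-1}d(Fw) = B(\ga_0^{-1}L)^{-1}d(\ga_0^{-1}L)B^{-1} + B\,dB^{-1}$ and the homogeneity of $\eta$ and of $B$.

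For part (c): having the explicit factorization of $\X$ in hand, $k$ for $\X$ is computed directly, and I expect it to come out as a constant multiple (or a simple algebraic expression) of $\sqrtmaa = \sqrt{-a-t-\bar t}$, which is exactly $\sqrt{-a-\log|z|^2}$; the reality constraint forces the appearance of $t+\bar t = \log|z|^2$ in this symmetric combination. Passing from $\X$ to $\X L_0$ multiplies $B$ by a factor that, because $L_0(0)=I$ and $L_0$ is holomorphic, contributes to $k$ a function of $z$ tending to $1$ as $z\to 0$; tracking the $O(\la)$-corrections confirms this and yields the stated form of $k$.

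The main obstacle I anticipate is controlling the perturbation $L_0$ in the second step with enough uniformity: one must show not merely that $\X L_0$ has \emph{some} Iwasawa factorization near $0$, but that it lands in the \emph{same} orbit (that of $w$) as $\X$ and that the normalized factor $B$ still exists and depends smoothly on $z$ up to the boundary behaviour needed for (c). This requires knowing that the orbit of $w$ is open (cited from \cite{Ke99}) and that $\X$ stays in a ``stable'' part of that orbit as $z\to 0$ — i.e.\ that the quantity $-a-\log|z|^2$ under the square root stays positive, which is why $V$ must be a small punctured-type neighbourhood and why $a>0$ is needed. The homogeneity in (b) is comparatively formal once uniqueness of the normalized factorization is established; the real work is the explicit computation for $\X$ and the openness/continuity bookkeeping for the transfer to $\X L_0$.
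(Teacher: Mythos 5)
Your overall plan — factorize $\X=\ga_0^{-1}e^{tN/\la}$ explicitly, transfer to $\X L_0$ near $z=0$, use uniqueness of the normalized factorization for homogeneity, and read $k$ off the $\X$-factor — matches the paper's strategy (the paper's Proposition~\ref{Z} plays the role of your explicit factorization of $\X$, and its Lemma~\ref{h} supplies the motivation for this $\ga_0$, which you skip). Parts (b) and (c) of your sketch are essentially sound once (a) is in place; for (b) you would also need the observation that $L_0$ itself is homogeneous ($\tilde L_0 = \d^{-1} L_0 \d$), which follows from the homogeneity of $\eta$ plus uniqueness of the solution to $L_0^{-1}dL_0 + \tfrac1\la L_0^{-1}NL_0\,\tfrac{dz}{z}=\eta$ with $L_0(0)=I$, as you implicitly indicate.

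The genuine gap is in your transfer step (a). You say the factorization of $\X L_0$ follows from that of $\X$ ``by openness of the big cell,'' because $L_0$ is close to $I$ near $z=0$; but you also correctly flag this as your ``main obstacle'' without resolving it, and the resolution you suggest (the positivity of $-a-\log|z|^2$) is not sufficient. The problem is that $\X$, and hence $B_\X$ and $Z=C(\X)^{-1}\X$, are unbounded as $z\to 0$ (the logarithm blows up), so there is no a priori lower bound on the radius of a neighbourhood of $\X(z)$ contained in the orbit of $w$, and the ``small right perturbation $L_0$'' argument does not close. The paper gets around this by converting the Iwasawa problem for $\X L_0$ into a Birkhoff problem for $C(\X L_0)^{-1}\X L_0=C(L_0)^{-1}ZL_0$, and then making the crucial estimate that $Z^{-1}C(L_0)^{-1}ZL_0 \to I$ as $z\to 0$: the $O(z)$ vanishing of $L_0-I$ beats the logarithmic growth of $Z$ under conjugation (this uses $\lim_{z\to0}z\log|z|^2=0$). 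They then split $B_\X=B_\X^{(1)}B_\X^{(2)}$ with $B_\X^{(1)}$ diagonal and $B_\X^{(2)}\to I$, and Birkhoff-factorize $B_\X^{(2)}U_-$ (again continuous to $z=0$ with value $I$). This conjugation-by-$Z$ estimate and the $B_\X^{(1)}B_\X^{(2)}$ split are the concrete technical ideas missing from your sketch; without them, the perturbation argument does not go through.
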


To prove the theorem, we focus on property (b), which will be sufficient to determine a family of loops $\ga_0$.  Then we shall show that properties (a) and (c) are satisfied for all such $\ga_0$. 

\begin{notation}
For any map $f=f(z,\zbar, \la)$, we shall write
$\tilde f(z,\zbar,\la)= f(\eps^2 z,\eps^{-2} \zbar,\eps\la)$. Thus, a map $f$ is homogeneous if and only if $\tilde f=\d^{-1} f \d$. To simplify notation, however, we shall omit $\zbar$  and just write $f(z,\la)$, as in the case of the smooth function $B$ in part (b) of the above theorem.
\end{notation} 

As a preliminary step, we consider 
\[
\X=\ga_0^{-1} e^{tN/\la}.
\]
This satisfies $\X^{-1}d\X=\tfrac1\la Ndt$.  Since $\tilde \X$ and $\d^{-1} \X \d$ satisfy the same equation, there exists some 
$\de=\de(\eps) \in (\La SL_2\C)_\si$ such that
\begin{equation*}
\tilde \X = \de \d^{-1} \X \d.
\end{equation*}

\begin{lemma}\label{h} ${}$

\noindent(a) If $\X$ admits an Iwasawa factorization of the form $\X=F_\X w  B_\X$ with $w=I$ or $\w$ and $\tilde B_\X = \d^{-1} B_\X \d$, then $\de \in (\La SU_{1,1})_\si$.

\noindent(b) $\de \in (\La SU_{1,1})_\si$ if and only if
\[
C(\ga_0) \ga_0^{-1} = \pm
\bp
a  &  \la \\
-1/\la & 0
\ep
\]
for some $a\in\R$.  

\noindent(c)
There exists a loop $\ga_0$ satisfying the condition  
\[
C(\ga_0) \ga_0^{-1}=
\rho
\bp
a  &  \la \\
-1/\la & 0
\ep,
\quad
\rho=\pm 1
\]
if the $(1,1)$ entry of $C(\ga_0) \ga_0^{-1}$ is positive,  i.e.\   $\rho a>0$.  When this condition holds, a suitable $\ga_0$ is given by:

(1) If $a>0$, $\rho>0$, then 
$
\ga_0=
\bp
1/\sqrta   &  -\la/\sqrta  \\
0  &  \sqrta
\ep
$

(2) If $a<0$, $\rho<0$, then 
$
\ga_0=
\bp
1/\sqrtma   &  \la/\sqrtma  \\
0  &  \sqrtma
\ep.
$
\end{lemma}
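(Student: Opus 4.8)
\emph{Plan, and proof of (a).} I would treat the three parts in turn: (a) is a short formal deduction, (b) is the technical core, and (c) a direct construction. Recall that $\de=\de(\eps)$ is the element of $(\La SL_2\C)_\si$ with $\tilde\X=\de\,\d^{-1}\X\d$, that $C$ is the conjugate-linear \emph{automorphism} $C(\ga)(\la)=D(\overline{\ga(1/\bar\la)}^{t})^{-1}D$ of $\La SL_2\C$ with fixed-point group $(\La SU_{1,1})_\si$, and that $C(\d)=\d$ while $C$ commutes with the reparametrisation $\la\mapsto\eps\la$ for $\eps\in S^1$ (since $|\eps|=1$). For (a): applying $f\mapsto\tilde f$ to $\X=F_\X wB_\X$ gives $\tilde\X=\tilde F_\X\,\tilde w\,\tilde B_\X$, and for both $w\in\{I,\w\}$ one has $\tilde w=\d^{-1}w\d$ (trivially for $w=I$; for $w=\w$ because conjugation by $\d=\diag(1,\eps)$ scales the off-diagonal entries by $\eps^{\pm1}$, exactly as $\la\mapsto\eps\la$). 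Using this, the hypothesis $\tilde B_\X=\d^{-1}B_\X\d$, and $\d^{-1}\X\d=(\d^{-1}F_\X\d)(\d^{-1}wB_\X\d)$, comparison with $\tilde\X=\de\,\d^{-1}\X\d$ forces $\de=\tilde F_\X\,\d^{-1}F_\X^{-1}\d$; since $F_\X$, $\tilde F_\X$ and $\d$ all lie in the group $(\La SU_{1,1})_\si$, so does $\de$.

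\emph{Proof of (b).} First I would make $\de$ explicit. Since $t=\log z\mapsto t+2\log\eps$ under $f\mapsto\tilde f$, a one-line computation gives $\widetilde{e^{tN/\la}}=\de_0\,\d^{-1}e^{tN/\la}\d$ with $\de_0=\de_0(\eps)=I+\tfrac{2\log\eps}{\eps\la}N$, and as $\ga_0$ and $\d$ do not depend on $z$,
\[
\de(\eps)=\ga_0(\eps\la)^{-1}\,\de_0\,\d^{-1}\ga_0(\la)\,\d .
\]
Put $K:=C(\ga_0)\ga_0^{-1}$, so $C(\ga_0)=K\ga_0$. Applying $C$ (an automorphism, so the factor order is preserved) and using the direct computation $C(\de_0)(\la)=I-2\eps\la\log\eps\,N^{t}$, the equation $C(\de)=\de$ simplifies — every $\ga_0$-factor cancels — to the condition, involving $K$ only,
\[
K(\eps\la)=C(\de_0)\,\bigl(\d^{-1}K(\la)\,\d\bigr)\,\de_0^{-1}\qquad\text{for all }\eps\in S^1 .
\]
For ``only if'', expanding this to first order in $\th$ (with $\eps=e^{i\th}$) yields the ODE $\la\,\partial_\la K=[K,E_{22}]-2\la N^{t}K-\tfrac2\la KN$ ($E_{22}=\diag(0,1)$), and writing $K=\sum_n K_n\la^n$ turns it into a recursion on the Fourier coefficients forcing $K_n=0$ for $n\notin\{-1,0,1\}$, $K_0=\diag(A_0,0)$, and $K_{\pm1}$ off-diagonal with opposite entries, i.e.\ $K=\bs A_0 & b\la\\ -b/\la & 0\es$ for constants $A_0,b$. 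Then $\det K=b^{2}=1$ gives $b=\rho\in\{\pm1\}$; and since $C$ is an involution, $K=C(\ga_0)\ga_0^{-1}$ forces $C(K)K=I$, i.e.\ $C(K)=K^{-1}$, which a short computation of $C(K)$ shows is equivalent to $A_0\in\R$. Hence $K=\rho\bs a & \la\\ -1/\la & 0\es$ with $a=A_0/\rho\in\R$. Conversely, for such $K$ one has $\d^{-1}K(\la)\d=K(\eps\la)$ by inspection, so the displayed condition reduces to $K(\nu)^{-1}N^{t}K(\nu)=-\nu^{-2}N$ (with $\nu=\eps\la$), a one-line $2\times2$ identity; hence it holds for all $\eps$ and $\de\in(\La SU_{1,1})_\si$.

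\emph{Proof of (c), and main obstacle.} I would solve $C(\ga_0)\ga_0^{-1}=\rho\bs a & \la\\ -1/\la & 0\es$ with the simplest ansatz compatible with $\si(\ga_0)=\ga_0$, namely $\ga_0=\bs \al & \be\la\\ 0 & \ga\es$ with $\al\ga=1$. A short computation gives
\[
C(\ga_0)\ga_0^{-1}=\bp |\ga|^{2} & -\bar\ga\be\,\la\\ \bar\be\ga\,\la^{-1} & |\al|^{2}-|\be|^{2}\ep ,
\]
so matching with $\rho\bs a & \la\\ -1/\la & 0\es$ requires $|\ga|^{2}=\rho a$ (hence $\rho a>0$ is necessary and $|\ga|=\sqrt{\rho a}$), $\be=-\rho/\bar\ga$, and $|\al|=|\be|$ (automatic, from $\al\ga=1$). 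Taking $\ga=\sqrt{\rho a}>0$ yields $\ga_0=\bs 1/\sqrt a & -\la/\sqrt a\\ 0 & \sqrt a\es$ in case (1) and $\ga_0=\bs 1/\sqrt{-a} & \la/\sqrt{-a}\\ 0 & \sqrt{-a}\es$ in case (2), as claimed. The only substantial work is in (b): the reduction of $C(\de)=\de$ to a condition on $K$ alone and the solution of that condition, where one must simultaneously carry the conjugate-linear automorphism $C$, the twist $\si$, the scaling $\la\mapsto\eps\la$, and the $\log\eps$-terms, and verify that all $\ga_0$-dependence genuinely drops out; parts (a) and (c) are routine once this bookkeeping is in place.
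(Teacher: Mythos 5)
Your parts (a) and (c) are essentially the paper's arguments. In (a) the paper writes $\de=\tilde F_\X\tilde w\,\d^{-1}w^{-1}F_\X^{-1}\d$ and then observes $\tilde w\,\d^{-1}w^{-1}=\d^{-1}$; you instead use $\tilde w=\d^{-1}w\d$ directly — equivalent. In (c) the paper obtains $\ga_0$ by writing down an explicit $LU$-type factorization of $\pm\bs a&1\\-1&0\es$ and conjugating by $\diag(1,\la)$; you get the same matrices by imposing an upper-triangular ansatz and matching $C(\ga_0)\ga_0^{-1}$ — again essentially the same computation, and your remark that $|\al|=|\be|$ is automatic is correct.

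Part (b) is where you genuinely diverge. Both proofs first reduce $C(\de)=\de$ to a relation in $K=C(\ga_0)\ga_0^{-1}$ alone (your equation $K(\eps\la)=C(\de_0)(\d^{-1}K(\la)\d)\de_0^{-1}$ is a rewriting of the paper's, obtained by pushing $\d$ through the exponentials, and the cancellation of the $\ga_0$-factors you describe does happen). From there the paper specializes to $\la=1$ and uses the fact that $\ga_0$ is single-valued to argue that the $\log\eps^2$ terms must drop out, which immediately forces $b+c=0,\ d=0$ on $C(\ga_0(1))\ga_0(1)^{-1}$; the general-$\la$ form then follows from the identity $C(\ga_0(\eps))\ga_0(\eps)^{-1}=\d^{-1}C(\ga_0(1))\ga_0(1)^{-1}\d$. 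You instead differentiate at $\eps=1$ and solve the resulting equation $\la\partial_\la K=[K,E_{22}]-2\la N^tK-\tfrac2\la KN$ on Fourier coefficients. I checked the recursion: it does force $s_n\equiv0$, $q_n=0$ ($n\ne1$), $r_n=0$ ($n\ne-1$), $p_n=0$ ($n\ne0$) and $r_{-1}=-q_1$, so $K=\bs A_0&b\la\\-b/\la&0\es$ as you claim, and the remaining constraints $\det K=1$ and $C(K)=K^{-1}$ (from $C$ being an involution) give $b=\pm1$ and $A_0\in\R$. Your converse via the identity $K(\nu)^{-1}N^tK(\nu)=-\nu^{-2}N$ is also correct.

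The trade-off: the paper's $\la=1$-plus-single-valuedness trick is shorter and sidesteps Fourier bookkeeping, but yields only $K(1)$ directly and must appeal to the conjugation identity to recover $K(\la)$. Your ODE/Fourier route is more mechanical and longer, but it determines the full loop $K(\la)$ in one pass and makes no appeal to branch considerations for $\log\eps$ beyond expanding near $\eps=1$. One presentational caveat: when you say ``$K_{\pm1}$ off-diagonal with opposite entries,'' what the recursion actually gives is $K_1=q_1E_{12}$, $K_{-1}=-q_1E_{21}$; the intended meaning is clear from your next display, but a reader might momentarily misparse it. Overall the proposal is correct, with (b) a genuinely different (and valid) argument from the paper's.
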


\begin{proof}
(a)  From
$
\de (\d^{-1}F_\X w \d)(\d^{-1}B_\X\d)=
\de \d^{-1} \X \d=
\tilde \X=
\tilde F_\X \tilde w \tilde B_\X
$
and the assumption
$\d^{-1} B_\X \d=\tilde B_\X$,
we have
$\de (\d^{-1}F_\X w \d)=\tilde F_\X \tilde w$,
so $\de = \tilde F_\X \tilde w \d^{-1} w^{-1} F_\X^{-1} \d$.
Now, $\tilde w \d^{-1} w^{-1}=\d^{-1}$, so
$\de=\tilde F_\X  \d^{-1} F_\X^{-1} \d 
\in (\La SU_{1,1})_\si$.

\noindent (b) We have
\begin{align*}
\de&=\tilde \X \d^{-1} \X^{-1} \d\\
&= \tilde\ga_0^{-1}  
e^{ 
(\log \eps^2z) N/(\la\eps)
}
\d^{-1}
e^{ 
-(\log z) N/\la
}
\ga_0  \d\\
&=
\tilde\ga_0^{-1} \d^{-1}
e^{ 
(\log \eps^2) N/\la
}
\ga_0  \d.
\end{align*}
Here, and elsewhere, we write $\log \eps^2 z = 
\log \eps^2 + \log z$ even though $\eps$ and $z$ are complex numbers. This is justifiable as it suffices (for our arguments) to work locally with a fixed branch of $\log$.

Thus, the required condition 
$\de \in (\La SU_{1,1})_\si$, i.e.\ 
$\de=C(\de)$, is
\begin{align*}
\tilde\ga_0^{-1} \d^{-1}
e^{ 
(\log \eps^2) N/\la
}
\ga_0  \d
&=
C(\tilde\ga_0)^{-1} 
\d^{-1}
D
e^{ 
-(\log \eps^{-2}) N^t\la
}
D\,
C(\ga_0)  \d
\\
&=
C(\tilde\ga_0)^{-1} 
\d^{-1}
e^{ 
-(\log \eps^{2}) N^t\la
}
\,
C(\ga_0)  \d,
\end{align*}
in other words
\[
C(\tilde\ga_0) \tilde\ga_0^{-1}=
\d^{-1}
\left(
e^{ 
-(\log \eps^2) N^t \la
}
C(\ga_0) \ga_0^{-1}
e^{ 
-(\log \eps^2) N/\la
}
\right)
\d.
\]
Let us assume now that $\de \in (\La SU_{1,1})_\si$.  
Putting $\la=1$ in the above equation gives
\[
C(\ga_0(\eps)) \ga_0(\eps)^{-1}=
\d^{-1}
\bp
1 & -\log \eps^2 \\
0 & 1
\ep
C(\ga_0(1)) \ga_0(1)^{-1}
\bp
1 & 0 \\
-\log \eps^2 & 1
\ep
\d.
\]
The right hand side is, a priori,  a polynomial in $\log\eps^2$.  Since $\ga_0$ is single-valued, $\log\eps^2$ cannot occur, so we must obtain the same result if we replace $\log\eps^2$ by zero.  This gives
\begin{equation}\label{oneparameter}
C(\ga_0(\eps)) \ga_0(\eps)^{-1}=
\d^{-1}
C(\ga_0(1)) \ga_0(1)^{-1}
\d,
\end{equation}
where the matrix
\[
C(\ga_0(1)) \ga_0(1)^{-1} 
\ \overset{\text{def}}{=}\ 
\bp
a & b\\
c & d
\ep
\]
satisfies
\[
\bp
a & b\\
c & d
\ep
=
\bp
1 & -\log \eps^2 \\
0 & 1
\ep
\bp
a & b\\
c & d
\ep
\bp
1 & 0 \\
-\log \eps^2 & 1
\ep;
\]
this implies that $b+c=0$ and $d=0$.  Moreover, from the fact that
$C\left(
C(\ga_0(1)) \ga_0(1)^{-1}
\right)=
\left(
C(\ga_0(1)) \ga_0(1)^{-1}
\right)^{-1}
$,
we have $a=\bar a$ and $c=-\bar b$.  Finally, since 
$\det \left( C(\ga_0(1)) \ga_0(1)^{-1} \right) = 1$, 
we must have $b=\pm 1$.  This shows that $C(\ga_0(\la)) \ga_0(\la)^{-1}$ has the stated form.

Conversely, if $C(\ga_0(\la)) \ga_0(\la)^{-1}$ has this form, 
from the explicit formula (\ref{oneparameter}) for $C(\ga_0)\ga_0^{-1}$ it is easy to verify that $C(\de)=\de$.

\noindent(c)   If $a>0$ we have
\begin{equation}\label{iwaone}
\bp
a & 1\\
-1 & 0
\ep
=
\bp
\sqrta & 0\\
-1/\sqrta & 1/\sqrta
\ep
\bp
\sqrta & 1/\sqrta\\
0 & 1/\sqrta
\ep,
\end{equation}
and if $a<0$ we have 
\begin{equation}\label{iwatwo}
-\bp
a & 1\\
-1 & 0
\ep
=
\bp
\sqrtma & 0\\
1/\sqrtma & 1/\sqrtma
\ep
\bp
\sqrtma & -1/\sqrtma\\
0 & 1/\sqrtma
\ep.
\end{equation}
Conjugating by the matrix $\diag(1,\la)$, we obtain the stated results.
\end{proof}

We can now give the Iwasawa factorization $\X=F_\X w B_\X$, which is the \ll model case\rr for our main goal, the
 Iwasawa factorization of $\X L_0$.

\begin{proposition}\label{Z} Let $\X=\ga_0^{-1} e^{tN/\la}$ where $\ga_0$ is as in part (c) of  Lemma \ref{h} with $\rho a>0$.
Then the Iwasawa factorization
$\X=F_\X w B_\X$
is given by:

\noindent(1) If $a>0$, $\rho>0$, then:

\noindent\ (1a) $w=I, 
B_\X=
\bp
\sqrtaa   &  \la/\sqrtaa  \\
0  &  1/\sqrtaa
\ep
$
if
$a+t+\bar t >0$.

\noindent\ (1b)
$w=\w\!\!, 
B_\X=
\bp
\sqrtmaa   &  -\la/\sqrtmaa  \\
0  &  1/\sqrtmaa
\ep
$
if
$a+t+\bar t <0$.

\noindent(2) If $a<0$, $\rho<0$, then:

\noindent\ (2a) $w=\w\!\!, 
B_\X=
\bp
\sqrtaa   &  \la/\sqrtaa  \\
0  &  1/\sqrtaa
\ep
$
if
$a+t+\bar t >0$.

\noindent\ (2b) $w=I, 
B_\X=
\bp
\sqrtmaa   &  -\la/\sqrtmaa  \\
0  &  1/\sqrtmaa
\ep
$
if
$a+t+\bar t <0$.

\noindent Of these,  (1b) and (2b) are valid for $z=e^t$ in
a domain $U=V\cap(\C-(-\infty,0])$, where $V$ is some neighbourhood of $z=0$ in $\C$.
\end{proposition}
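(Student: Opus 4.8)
The plan is to carry out the Iwasawa factorization explicitly by guess-and-verify: write $\X$ out, take $w$ and $B_\X$ as in the statement, set $F_\X:=\X B_\X^{-1}w^{-1}$, and check that $F_\X$ takes values in $(\La SU_{1,1})_\si$; the uniqueness of the Iwasawa factorization (with $B=\diag(k,k^{-1})+O(\la)$, $k>0$) recalled in section~\ref{cmc} then identifies these as \emph{the} Iwasawa factors of $\X$. Since $N^2=0$ one has $e^{tN/\la}=I+\tfrac1\la tN$, so a one-line multiplication gives, in case (1),
\[
\X=\ga_0^{-1}e^{tN/\la}=\bp \tfrac{a+t}{\sqrt a} & \tfrac{\la}{\sqrt a}\\ \tfrac{t}{\la\sqrt a} & \tfrac{1}{\sqrt a}\ep ,
\]
and the analogous matrix (with $\sqrt a$ replaced by $\sqrt{-a}$, and the top row negated) in case (2). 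Forming the product $F_\X=\X B_\X^{-1}w^{-1}$ with the $w$ and $B_\X$ proposed in (1a)--(2b) and simplifying with $t+\bar t=2\log|z|$, one gets in each case a loop $F_\X$ of the shape $\tfrac1c\bp\ast&\la\ast\\ \ast/\la&\ast\ep$ with entries affine in $t,\bar t$; for instance in case (1b), $F_\X=(\sqrt a\,\sqrt{-a-t-\bar t})^{-1}\bp-\bar t&-\la(a+t)\\ (-a-\bar t)/\la&-t\ep$, and the other three cases are entirely parallel.

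It then remains to check that these are legitimate Iwasawa factors. The matrix $B_\X$ is a polynomial in $\la$, hence lies in $(\La_+SL_2\C)_\si$ (the twisting $\si(B_\X)=B_\X$ being immediate from the way $\la$ enters), and it already has the normalized form $\diag(k,k^{-1})+O(\la)$ with $k=\sqrt{a+t+\bar t}$ in cases (1a),(2a) and $k=\sqrt{-a-t-\bar t}$ in (1b),(2b); the requirement $k>0$ is exactly the stated sign condition on $a+t+\bar t$, which is what forces the four-way split. The loop $F_\X$ is a Laurent polynomial in $\la$, it satisfies $\si(F_\X)=F_\X$ for the same reason as $B_\X$, and $\det F_\X=1$ — after the $t\bar t$ terms cancel, this amounts to the identity $t\bar t-(a+t)(a+\bar t)=-a(a+t+\bar t)$. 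The one point needing care is $C(F_\X)=F_\X$, i.e.\ that $F_\X$ takes values in $SU_{1,1}$: one substitutes $\la\mapsto 1/\bar\la$, conjugates entrywise (which interchanges $t$ and $\bar t$), transposes, inverts the $2\times2$ matrix, and conjugates by $D=\diag(1,-1)$; the sign that $D$ contributes on the off-diagonal is exactly cancelled by the one produced by the inversion, and the result is $F_\X$ again. Since $w\in\{I,\w\}$, the identity $\X=F_\X wB_\X$ is an Iwasawa factorization of the normalized type, hence the unique one, and (1)--(2) follow.

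For the last assertion, $t+\bar t=\log z+\overline{\log z}=2\log|z|\to-\infty$ as $z\to0$, so $a+t+\bar t<0$ on the punctured disc $0<|z|<e^{-a/2}$; taking $V=\{\,|z|<e^{-a/2}\,\}$ and restricting to $U=V\cap(\C-(-\infty,0])$, on which $t=\log z$ is a single-valued branch, places us in case (1b) when $a>0$ and in case (2b) when $a<0$.

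The only genuinely delicate point is the verification $C(F_\X)=F_\X$: one must keep simultaneous track of the loop involution $\la\mapsto 1/\bar\la$, the exchange $t\leftrightarrow\bar t$ under conjugation, and the signs arising from the $2\times2$ inversion and from conjugation by $D$; and one must take care to pair the correct $w\in\{I,\w\}$ with the correct sign of $a+t+\bar t$ in each of the four cases. Everything else is routine matrix algebra.
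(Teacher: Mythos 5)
Your proof is correct, and the computations check out (I verified the explicit form of $F_\X$ in cases (1a) and (1b), the determinant identity, the reality condition $C(F_\X)=F_\X$, and the twisting). However, the route is genuinely different from the paper's. The paper never computes $F_\X$. Instead it introduces the frame-independent quantity
\[
Z \;=\; C(\X)^{-1}\X \;=\; \rho\bp a+t+\bar t & \la \\ -1/\la & 0 \ep
\]
(using the explicit form of $C(\ga_0)\ga_0^{-1}$ from Lemma~\ref{h}(c)), observes that $Z=C(B_\X)^{-1}C(w)^{-1}wB_\X$, and then reads off $w$ and $B_\X$ from the two elementary $SL_2$-level factorizations (\ref{iwaone}) and (\ref{iwatwo}) with $a$ replaced by $a+t+\bar t$. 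This reduces everything to a finite-dimensional Iwasawa decomposition of $\C P^1$ under $SU_{1,1}$, and it avoids the entrywise $C(F_\X)=F_\X$ check entirely. Your guess-and-verify version computes $F_\X=\X B_\X^{-1}w^{-1}$ explicitly and checks the reality and twisting conditions by hand; this is more work and requires careful bookkeeping of signs through $\la\mapsto 1/\bar\la$, complex conjugation, transpose-inverse, and conjugation by $D$ in each of the four cases, but it is fully self-contained and does not lean on (\ref{iwaone})--(\ref{iwatwo}). The paper's approach is also the one that generalizes cleanly to the proof of Theorem~\ref{main}(a), where the same quantity $C(\cdot)^{-1}(\cdot)$ is used to set up a Birkhoff factorization; your explicit-$F_\X$ method would not extend as directly there.

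One small caution worth making explicit in your write-up: the uniqueness you invoke (from section~\ref{cmc}, for $B=\diag(k,k^{-1})+O(\la)$ with $k>0$) is uniqueness within a fixed open orbit. Your argument does establish which orbit $\X$ lies in (by exhibiting a valid $w$-factorization), so the logic is sound, but it is worth noting that producing one normalized factorization with a given $w\in\{I,\w\}$ is what pins down the orbit, since there are precisely two open orbits.
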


\begin{proof}
Let
\begin{equation}\label{Zdef}
Z
=
C(\X)^{-1} \X 
=
C(e^{tN/\la})^{-1}
C(\ga_0) \ga_0^{-1}
e^{tN/\la}
=
\rho
\bp
a+t+\bar t  &  \la \\
-1/\la & 0
\ep,
\end{equation}
where we have used the formula for $C(\ga_0) \ga_0^{-1}$ from part (c) of Lemma \ref{h}.
We compare this with
\[
Z=C(\X)^{-1} \X = C(B_\X)^{-1} C(w)^{-1}  w B_\X,
\]
in order to read off
the expressions for $w$ and $B_\X$ (the factors 
$C(B_\X)^{-1}$,
$B_\X$
are unique
when the middle factor
$ C(w)^{-1}  w$  is $\pm I$).
Parts (1a), (2b) follow directly from 
 (\ref{iwaone}), (\ref{iwatwo}) respectively (replace
 $a$ by $a+t+\bar t$).  For part (1b), let us rewrite
 (\ref{iwatwo}) as
 \[
\bp
a & 1\\
-1 & 0
\ep
=
\bp
\sqrtma & 0\\
1/\sqrtma & 1/\sqrtma
\ep
C \w^{-1} \w
\bp
\sqrtma & -1/\sqrtma\\
0 & 1/\sqrtma
\ep.
\]
If we now replace
 $a$ by $a+t+\bar t$ we obtain the desired result.  The proof of part (2a) is similar.
\end{proof}

\begin{remark} Proposition \ref{Z} (and its proof, given by formulae (\ref{iwaone}), (\ref{iwatwo})) is concerned essentially only with the Iwasawa decomposition of the flag manifold $\C P^1$ of the finite-dimensional Lie group $SL_2\C$ with respect to its real form $SU_{1,1}$.  If we regard $\C P^1$ as the orbit of the point
$
\bsq
1\\0
\esq
$
under
$SL_2\C$, then the orbits of $SU_{1,1}$ on $\C P^1$ are the upper hemisphere, the equator, and the lower hemisphere.   If we regard 
$
\bsq
z_0\\ z_1
\esq
\in\C P^1
$
as the point $z_1/z_0$ of $\C\cup\infty$, then these three orbits are given, respectively, by the conditions $\vert z_1/z_0\vert<1$,  
$\vert z_1/z_0\vert=1$, $\vert z_1/z_0\vert>1$.  
Now, we have
\[
\X
\bsq
1\\0
\esq
=
\ga_0^{-1} e^{tN/\la}
\bsq
1\\0
\esq
=
\begin{cases}
\bsq
a+t \\
t
\esq
\ \text{in case (1)}
\\
{}
\\
\bsq
-a-t \\
t
\esq
\ \text{in case (2).}
\end{cases}
\]
In case (1),
\[
1-\left\vert \tfrac{t}{a+t} \right\vert^2  =
\tfrac{a}{\vert a+t\vert^2}  (a+t+\bar t),
\]  
so cases (1a) or (1b) correspond exactly to whether
the \ll curve\rr
\[
\X
\bsq
1\\0
\esq:
z\mapsto
\bsq
a+t
\\
t
\esq
=
\bsq
a+\log z
\\
\log z
\esq
\in
\C P^1\cong\C\cup\infty
\]
takes values in the interior or exterior of the unit disk.  In both cases the curve approaches the boundary point $1\in S^1$ as $z\to 0$.  A similar observation holds for (2a), (2b).
\end{remark}

\begin{proof}[Proof of Theorem \ref{main}]
(a) To prove the existence of an Iwasawa factorization $\X L_0=FwB$,  it suffices to  prove the existence of a Birkhoff factorization of $C(\X L_0)^{-1} \X L_0$, just as we did for 
the Iwasawa factorization of $\X$ in Proposition \ref{Z}.  Thus, we aim to find $B$ in
\[
C(L_0)^{-1}  Z   L_0 =
C(\X L_0)^{-1} \X L_0 = 
C(B)^{-1}C(w)^{-1}w B,
\]
where $Z$ is given by equation (\ref{Zdef}).

The map $Z^{-1} C(L_0)^{-1}  Z L_0$ extends continuously to $z=0$ and takes the value $I$ there, because
$L_0=I + O(z)$ and $\lim_{z\to 0} z\log \vert z\vert^2=0$.
Therefore there exists a Birkhoff factorization
\[
Z^{-1} C(L_0)^{-1}  Z L_0 = U_-U_+
\]
in a neighbourhood of $z=0$, with $U_-(0)=U_+(0)=I$.  
We obtain
\begin{align*}
C(L_0)^{-1}  Z L_0 &=Z U_-U_+\\
&= -C(B_\X)^{-1} B_\X U_- U_+
\ \text{(Proposition \ref{Z} (1b))}
\end{align*}
so it suffices to obtain a Birkhoff factorization of $ B_\X U_-$.
Now, 
\[
B_\X=
\bp
\sqrtmaa   &  -\la/\sqrtmaa  \\
0  &  1/\sqrtmaa
\ep
=
B_\X^{(1)}B_\X^{(2)},
\]
where
\[
B_\X^{(1)}=
\bp
\sqrtmaa   &  0  \\
0  &  1/\sqrtmaa
\ep,\ 
B_\X^{(2)}=
\bp
1   &  \la/
\text{\scriptsize
$(a+t+\bar t)$}
\\
0  &  1
\ep.
\]
Since $B_\X^{(2)}$
extends continuously to $z=0$ and takes the value $I$ there,
and $U_-(0)=I$, there exists a Birkhoff factorization of  $B_\X^{(2)}U_-$ near $z=0$, and hence of $B_\X U_-$.

\noindent(b)   First we show that $L_0$ is homogeneous. We use the fact that $L_0$ is determined uniquely by the o.d.e.\
\[
L_0^{-1}dL_0+ \tfrac1\la L_0^{-1} NL_0\tfrac{dz}z  =  L^{-1}dL =\tfrac1\la
\bp  &  1\\
1/z & 
\ep
dz
\]
and the initial condition $L_0(0)=I$ (the point $z=0$ is a removable singular point of this o.d.e.).  It is easy to verify that
$\d \tilde L_0 \d^{-1}$ also satisfies these conditions, so it must be equal to $L_0$.  Thus
$\tilde L_0 = \d^{-1} L_0 \d$, as required.  

Next, we have 
$\tilde \X = \de \d^{-1} \X \d$, where
(by Lemma \ref{h})  $\de\in(\La SU_{1,1})_\si$.  Hence
\[
(\tilde F \tilde w) \tilde B= \tilde \X \tilde L_0 =
\de  \d^{-1} \X \d \, \d^{-1} L_0 \d =
(\de \d^{-1} F w \d) \d^{-1} B \d, 
\]
i.e.\
\[
\d^{-1} B \d  \tilde B^{-1} = 
(\de \d^{-1} F w \d)^{-1} (\tilde F \tilde w).
\]
We have $\tilde w = \d^{-1} w \d$,  
so
$(\de \d^{-1} F w \d)^{-1} (\tilde F \tilde w)$
takes values  in 
$(\La SU_{1,1})_\si$.  Since $\d^{-1} B \d \tilde B^{-1}$ takes values in 
$(\La_+ SL_2\C)_\si$,  and its constant term is diagonal with positive entries, both sides of this equation must be $I$.  Hence $B$ is homogeneous.

\noindent(c) This follows from the formula for $B_\X^{(1)}$ in the proof of (a) above.
\end{proof}

\section{Second step: A distinguished CMC surface}\label{global}

\begin{theorem}\label{cv}  If $a=4\ga$ in Theorem \ref{main}, where
$\ga$ is the Euler constant 
$
\lim_{n\to\infty}
\left(
1+\tfrac12+\cdots+\tfrac1n - \log n
\right) 
\approx
0.577,
$
then the Iwasawa factorization of $\ga_0^{-1} L$ is 
defined on\footnote{(and on any other simply connected subset of $\C^\ast$)}
$\C - (-\infty,0]$.
\end{theorem}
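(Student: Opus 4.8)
The plan is to reduce the global statement to a known characterization of a specific solution of the Painlev\'e \Rmnum3 equation, exploiting the homogeneity established in Theorem \ref{main}. First I would make the reduction to an o.d.e.\ explicit: under the homogeneity assumption of Theorem \ref{main}(b), the connection form $\al=(Fw)^{-1}d(Fw)$ is invariant (up to conjugation by $\d(\eps)$) under the rotation $z\mapsto \eps^2 z$, so the metric function $u$ in the Gauss-Codazzi equation $(\log|b|)_{z\bar z} - |b|^2 + |p_1p_2|^2|b|^{-2}=0$ (with $p_1=1$, $p_2=1/z$, so $|p_1p_2|^2 = 1/|z|^2$) depends only on the radial variable $r=|z|$, equivalently on $x=\log|z|^2 = t+\bar t$. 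Substituting $u=u(x)$ reduces the p.d.e.\ to an ordinary differential equation in $x$, which after a standard change of variables is the radial \PP\ equation (the sinh-Gordon reduction alluded to in Section \ref{cmc}); one should record the precise form and the identification of the independent variable.

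Next I would pin down the asymptotic behaviour of our solution as $z\to 0$. By Theorem \ref{main}(c), $k = \sqrtmaa \cdot (1 + o(1))$ as $z\to 0$, and $e^u = k^2/H$, so with $H=\tfrac12$ we get an explicit leading asymptotic for $u$ (and hence for the corresponding \PP\ solution) as $x\to -\infty$, parametrized by $a$. The point is that this asymptotic depends on the parameter $a$ in a controlled, monotone way — it is a one-parameter family of local solutions, each extending to a punctured neighbourhood of $z=0$ but a priori only to a small sector because of the Iwasawa-orbit constraint.

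The key external input is the isomonodromy/uniqueness theory for \PP\ (the references \cite{MTW77}, \cite{FIKN06}, and the arguments supplied by A.~Its): among the solutions of the relevant \PP\ equation with the given type of behaviour at $z=0$, there is a distinguished one that is globally smooth (free of poles and singularities) on all of $\C-(-\infty,0]$, and it is singled out precisely by its asymptotic expansion as $z\to 0$. I would then match: compute the $z\to 0$ asymptotic of this known global solution, compare it term-by-term with the asymptotic of our family from the previous paragraph, and solve for $a$. This is where the Euler constant enters — the $\log z$ term in $L$ (coming from $f_1$, whose coefficients involve the harmonic numbers $1+\cdots+\tfrac1i$) produces, in the limit, exactly the combination $\lim_n(1+\cdots+\tfrac1n-\log n)=\ga$, giving $a=4\ga$. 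For that value of $a$ the Iwasawa factorization $\ga_0^{-1}L=FwB$ therefore persists on all of $\C-(-\infty,0]$ (and on any simply connected subset of $\C^\ast$), since no singularity of the \PP\ solution — equivalently, no failure of the factorization or jump to another Iwasawa orbit — can occur there.

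The main obstacle is the matching step, and more precisely the need to import and correctly invoke the \PP\ uniqueness theorem: establishing that the \emph{local} solutions produced by Theorem \ref{main} actually fall within the class covered by that theorem, and that the asymptotic data we can compute (essentially just the leading term governed by $a$, together with the $\log$-structure forced by $L_0$) suffices to identify our solution uniquely with the global one. Verifying the precise normalization constants relating $u$, $Q$, the \PP\ variable, and the classical expansions of $f_0, f_1$ — so that $a=4\ga$ comes out exactly rather than up to an undetermined constant — is the delicate bookkeeping; the rest (the homogeneity reduction and the absence of singularities once $a$ is fixed) is comparatively routine given Theorem \ref{main} and the cited \PP\ literature.
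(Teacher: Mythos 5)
Your overall strategy coincides with the paper's: reduce to the radial \PP\ equation via the homogeneity of Theorem~\ref{main}(b), extract the $z\to 0$ asymptotics from Theorem~\ref{main}(c) (giving $y\sim -\tfrac14 x(a+4\log x - 4\log 4)$ as $x\to 0$), match against a known globally smooth solution of \PP, and solve for $a$. You also correctly identify the matching/uniqueness step as the crux. However, two points need repair.

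First, your account of where the Euler constant comes from is wrong. You claim that $\ga$ ``produces, in the limit'' from the harmonic numbers in $f_1$. In fact $f_1=O(z)$, and the entire contribution of $L_0$ vanishes as $z\to 0$: the proof of Corollary~\ref{asymp} shows the leading asymptotic of the \PP\ solution depends on $\ga_0$ only through the single parameter $a$, and the harmonic numbers never enter. The constant $\ga$ appears on the \emph{other} side of the matching equation, as part of the known small-$x$ expansion $y\sim -x(-\ga-\log x + \log 4)$ of the distinguished solution $\eta(x,0,1/\pi)$ of McCoy--Tracy--Wu \cite{MTW77}. Equating the two expansions is what forces $a=4\ga$. (The harmonic numbers are indeed ultimately responsible for the $\ga$ in MTW's Bessel-function analysis, but that derivation is imported wholesale from \cite{MTW77}; it is not redone here via $f_1$, and attempting to read $\ga$ off from $f_1$ in the matching step does not work.)

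Second, the uniqueness step you flag as the ``main obstacle'' is not a minor bookkeeping issue — it is where the paper must invoke a result that lies at the boundary of the standard theory. The classification of real \PP\ solutions by asymptotic data $(r,s)$, with $2v(x)\sim r\log x + s$, in Chapter 15 of \cite{FIKN06} covers the generic regime $|r|<2$; our solution has $r=2$ exactly, which is the degenerate boundary case $(\pm 1, a)$, and for this one needs the separate analysis of \cite{ItNiXX}. Without noticing this, you cannot legitimately assert that the asymptotic data determines the solution uniquely; with it, the argument closes. So the proposal is essentially the right route, but you must replace the $f_1$-to-$\ga$ mechanism by a genuine citation of MTW's asymptotic formula, and supplement the generic FIKN uniqueness with the $r=2$ boundary result.
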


In order to prove this, we focus on the CMC surface interpretation of $\ga_0^{-1} L$.  That is, we focus on the corresponding $u, Q, H$ as defined at the end of section \ref{cmc}.  We have $e^u=k^2/H$ and $Q=2/(zH)$, where $H$ is a nonzero constant (to be fixed shortly). The existence of the Iwasawa factorization is equivalent to the existence of $u$; the latter is what we shall investigate in this section.

\begin{proposition}
If the map $B$ satisfies $\tilde B=\d^{-1}B \d$, then $u$ depends only on $\vert z\vert$ (i.e.\ the metric is radially symmetric).
\end{proposition}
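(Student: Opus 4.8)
The plan is to extract the conclusion directly from the $\la^{0}$-coefficient of the homogeneity relation. First I would recall from the end of Section~\ref{cmc} that, writing $B=\diag(k,k^{-1})+O(\la)$ with $k=k(z)>0$, the metric potential of the associated CMC surface is $e^{u}=k^{2}/H$, where $H$ is a fixed nonzero constant. Hence it suffices to prove that $k$ depends only on $\vert z\vert$.

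Next I would expand $B$ as a power series in $\la$, with leading (constant) coefficient $\diag(k(z),k(z)^{-1})$, and substitute into the hypothesis, which in unabbreviated form reads $B(\eps^{2}z,\eps\la)=\d(\eps)^{-1}B(z,\la)\d(\eps)$. Comparing the $\la^{0}$-terms of the two sides gives $\diag(k(\eps^{2}z),k(\eps^{2}z)^{-1})=\d(\eps)^{-1}\diag(k(z),k(z)^{-1})\d(\eps)$. Since $\d(\eps)=\diag(1,\eps)$ is diagonal, conjugation by it fixes the diagonal matrix on the right, so this simplifies to $k(\eps^{2}z)=k(z)$ for every $\eps\in S^{1}$.

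Finally, since $\eps\mapsto\eps^{2}$ is surjective on $S^{1}$, the identity $k(\eps^{2}z)=k(z)$ says that $k(\om z)=k(z)$ for all $\om\in S^{1}$, i.e.\ $k$ is invariant under rotations about the origin and therefore depends only on $\vert z\vert$. Consequently $e^{u}=k^{2}/H$, and hence $u$, depends only on $\vert z\vert$, which is the assertion; geometrically, the induced metric $g=4e^{2u}(dx^{2}+dy^{2})$ is radially symmetric.

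The computation is completely routine, so there is no genuine obstacle. The only point worth flagging is conceptual: one must isolate the constant-in-$\la$ part of $B$, since that is the part carrying the geometric quantity $k$, and the reason homogeneity forces its rotation-invariance is precisely that $\d(\eps)$ acts on it by conjugation while the leading coefficient is diagonal; the higher $\la$-coefficients of $B$ are not themselves rotation-invariant, but they play no role here.
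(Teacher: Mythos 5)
Your proof is correct and follows essentially the same argument as the paper's: both reduce to the observation that $B=\diag(k,k^{-1})+O(\la)$ together with the homogeneity relation forces $k(\eps^2 z)=k(z)$ for all $\eps\in S^1$, hence $u$ is rotation-invariant. You simply spell out the $\la^0$-coefficient comparison and the surjectivity of $\eps\mapsto\eps^2$ on $S^1$, steps the paper leaves implicit.
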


\begin{proof} Since $B=\diag(k,k^{-1})+O(\la)$, the condition
$\tilde B=\d^{-1}B \d$ implies that $k(\eps^2 z)=k(z)$ for all $\eps\in S^1$, i.e.\ $u(\eps^2 z)=u(z)$.
\end{proof}

Thus, 
the Gauss-Codazzi equations 
reduce
 to an o.d.e.\ in the real variable $r=\vert z\vert$:
\[
\tfrac14\left(
u_{rr} + \tfrac1r u_r
\right)
-H^2 e^{2u} + 
\tfrac{1}{
\vphantom{H^H} H^2 r^2}e^{-2u} = 0.
\]
The transformation
\[
v=u-\tfrac12 \log \vert Q\vert =
u - \tfrac12 \log\tfrac2H + \tfrac12 \log r
\]
converts this to
\[
\tfrac14\left(
v_{rr} + \tfrac1r v_r
\right)
-\tfrac{2H}r  e^{2v} + \tfrac{1}{2H r}e^{-2v} = 0.
\]
If we choose $H=\tfrac12$ we obtain
\[
v_{rr} + \tfrac1r v_r=
\tfrac8r \sinh 2v.
\]
A further transformation $x=4r^{1/2}$ converts this to the radial sinh-Gordon equation
\[
v_{xx} + \tfrac1x v_x=
2 \sinh 2v.
\]
Finally, it is easily verified that the function $y=e^v$ satisfies
\[
y^{\prime\prime} = \tfrac1y {(y^\prime)}^2 - \tfrac1x 
y^\prime + y^3 - \tfrac1y.
\]
This is the special case $\al=\be=0$, $\ga=1$, $\de=-1$ of the Painlev\'e \Rmnum3 equation
\[
y^{\prime\prime} = 
\tfrac1y {(y^\prime)}^2 - \tfrac1x y^\prime 
+\tfrac1x(\al y^2 + \be)
+ \ga y^3 + \tfrac\de y.
\]
(see section 14.4 of \cite{In26}).

\begin{corollary}\label{asymp} The family of solutions of the \P equation given by Theorem \ref{main} satisfies
the asymptotic condition
\[
y\sim -\tfrac14 x(a + 4\log x - 4\log 4)
\]
as $x\to 0$.  
\end{corollary}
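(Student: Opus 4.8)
The plan is to read off the stated asymptotics directly from part (c) of Theorem \ref{main}, by unwinding the chain of substitutions $u\to v\to y$ and the change of variable $r\to x$ used above to reduce the Gauss--Codazzi equation to the \P equation. Thus the corollary is essentially a bookkeeping exercise; the one piece of genuine input is the precise behaviour of $k$ near $z=0$, which Theorem \ref{main}(c) supplies.

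First I would translate Theorem \ref{main}(c) into an asymptotic for $k^2$. That part says $k$ equals $\sqrt{-a-t-\bar t}$ times a function tending to $1$ as $z\to 0$, where $t=\log z$. Since $t+\bar t = 2\operatorname{Re}\log z = 2\log r$, independently of the branch of $\log$, this gives
\[
k^2 = (-a-2\log r)\,(1+o(1))\qquad(r\to 0^+),
\]
which is a genuine asymptotic equivalence because $-a-2\log r\to+\infty$; in particular $k^2>0$ for small $r$, as required of $k$.

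Next I would substitute into the definitions. Recall from the end of Section \ref{cmc} that $e^u = k^2/H$ and $v = u - \tfrac12\log\tfrac2H + \tfrac12\log r$; taking $H=\tfrac12$ as in this section, so that $e^u = 2k^2$ and $v = u - \log 2 + \tfrac12\log r$, one gets $y = e^v = e^u\cdot\tfrac12\cdot r^{1/2} = k^2\sqrt r$. Using $x = 4r^{1/2}$, i.e.\ $\sqrt r = x/4$ and $\log r = 2\log x - 2\log 4$, this becomes
\[
y = k^2\sqrt r = (-a-2\log r)(1+o(1))\cdot\tfrac{x}{4}
= -\tfrac14 x\,(a + 4\log x - 4\log 4)\,(1+o(1)),
\]
which is the claimed asymptotic as $x\to 0$.

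The main (and really the only) point requiring care is the remark that multiplying the divergent quantity $-a-2\log r$ by a factor tending to $1$ does not disturb the leading term, so that passing through $\log$ and $\exp$ in the definitions of $v$ and $y$ is harmless; and the constants $4$ and $4\log 4$ in the final formula are precisely those produced by the factor $\sqrt{-a-t-\bar t}=\sqrt{-a-2\log r}$ together with the change of variable $x=4r^{1/2}$.
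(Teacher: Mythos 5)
Your proof is correct and follows essentially the same route as the paper's: translate Theorem~\ref{main}(c) into $k^2 \sim -a-2\log r$, plug into $e^u = k^2/H$ (with $H=\tfrac12$), then apply the substitutions $v = u - \tfrac12\log|Q|$, $y=e^v$, and $x=4r^{1/2}$. The only cosmetic difference is that you simplify to $y=k^2\sqrt r$ before substituting, whereas the paper writes $y=\tfrac12 e^u r^{1/2}$; the arithmetic is identical, and your explicit remark that the $(1+o(1))$ factor is harmless because $-a-2\log r\to+\infty$ is a sensible extra sentence.
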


\begin{proof}  Recall (from Theorem \ref{main})  that 
$k\sim \sqrtmaa
= 
\text{\scriptsize $\sqrt{-a-2\log r \vphantom{t^t} } $ }
$
as $x \ (=4r^{1/2}) \to 0$.
Since $k=\sqrt H e^{u/2}$ and $H=\tfrac12$, we have $e^u\sim -2(a+2\log r)$, hence $y=e^v = \tfrac12 e^u r^{1/2} \sim
-\tfrac14 x(a + 4\log x - 4\log 4)$.  
\end{proof}
 
\begin{proof}[Proof of Theorem \ref{cv}]
The proof depends upon the analysis in \cite{MTW77} of smooth solutions to the \P equation.  In the notation of formula  (4.121) of  that paper, 
there is a solution denoted by $y=\eta(x,0,1/\pi)$ which is smooth on $(0,\infty)$ and satisfies
$y \sim -x(\ga + \log x - \log 4)$ as $x\to 0$.  
We wish to deduce that our solution from Corollary \ref{asymp}
with $a=4\ga$ is the same as this solution, hence must extend to a (smooth) solution on $(0,\infty)$. For this we appeal to the theory of isomonodromic deformations,  and in particular the method of Chapters 13-15 of  \cite{FIKN06}.   

First, we recall that the \P equation can be regarded as an isomonodromy equation for a family of flat connections (formulae (13.1.1) and (15.1.1) of  \cite{FIKN06}). This is equivalent to the family of flat connections in sections \ref{cmc} and \ref{qcoh}, but regarded as an $r$-family of $\la$-connections rather than a 
$\la$-family of $r$-connections;  each $\la$-connection has an irregular singularity at $\la=0$ and $\la=\infty$.  
Solutions of \P are parametrized by their
monodromy data  (Stokes data) at these singular points, and
the exponential of any solution is known to be meromorphic on $\C -(-\infty,0]$.
For solutions which are real on a nonempty open subset of
$(0,\infty)$, this monodromy data
amounts  to (i) a complex number $p$,  with $1<\vert p\vert < \infty$,  or (ii) a pair $(\pm 1,a)$ where $a$ is a real number.  The second case may be regarded as the limit $p\to\infty$ of the first case.

In case (i), which is treated in detail in chapter 15 of \cite{FIKN06}, the monodromy data $p$ is equivalent to the asymptotic data $(r,s)$ at $x=0$, where
$2v(x) \sim  r \log x + s$ as $x\to 0$.  This follows from the explicit formulae (15.1.9)-(15.1.11).   Thus the asymptotic data at $x=0$ determines the solution.  
A similar result holds in case (ii), which will be treated in detail in \cite{ItNiXX}.  In case (i) one always has $\vert r\vert<2$, and case (ii) may be regarded as the limit $\vert r\vert \to 2$ ($r \to 2$ for our solution).
Case (ii) includes the above solution with asymptotics
$2v(x) \sim 2\log[-x(\ga + \log x - \log 4)]$ as $x\to 0$. We conclude that our solution from Corollary \ref{asymp}
with $a=4\ga$
agrees with the solution of \cite{MTW77}, and is therefore smooth on $(0,\infty)$.  

It follows that the frame $F$ (and the Iwasawa factorization
of $\ga_0^{-1} L_0$) is defined on any simply-connected subset of $\C^\ast$, in particular on
$\C - (-\infty,0]$.
%\qed
\end{proof}

The above argument shows that the Iwasawa factorization exists on the universal covering $\tilde \C^\ast = \C$.   The image of $F$ (or the CMC surface) consists of infinitely many pieces, obtained by analytic continuation from the piece with domain $\C - (-\infty,0]$.  Images\footnote{The images shown here were made with the software XLAB created by N. Schmitt. The timelike axis of $\R^{2,1}$ lies in the plane of reflectional symmetry.}
of the piece with domain
$\{ z\in \C - (-\infty,0] \st \vert z\vert < 0.5 \}$ are shown in Figure \ref{fig1}.
\begin{figure}[ht]
\begin{center}
\includegraphics[scale=0.26]{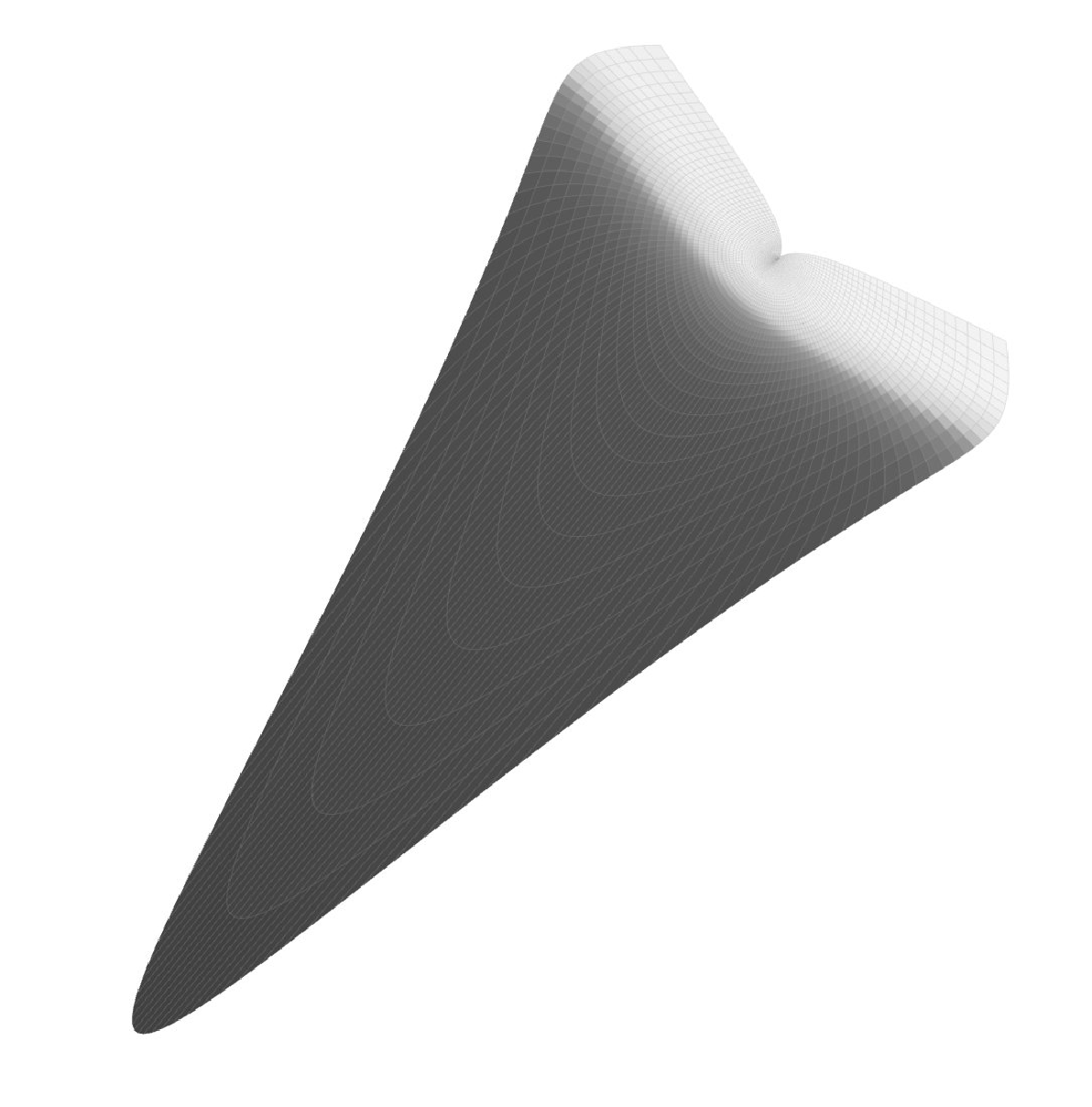}
\includegraphics[scale=0.26]{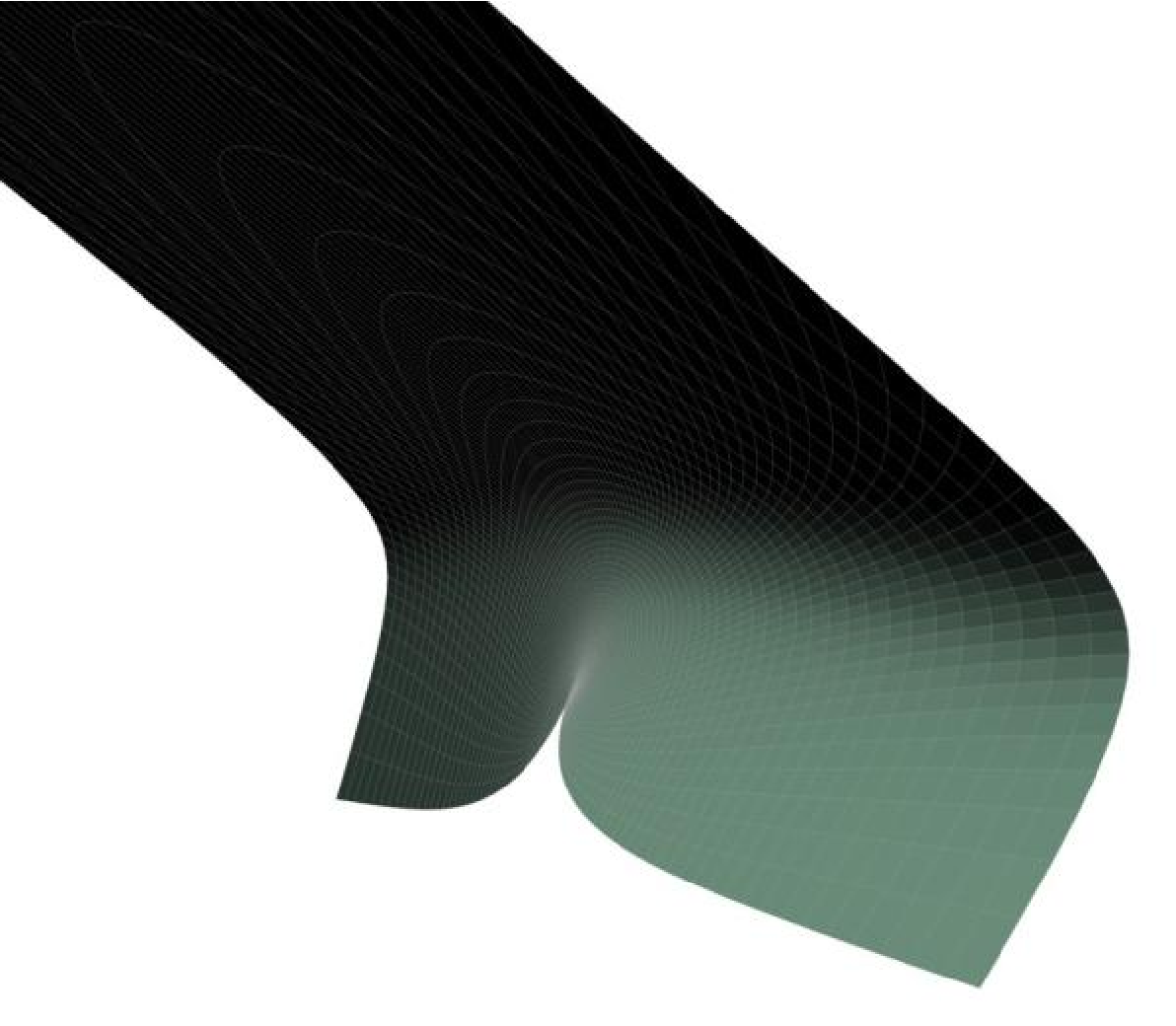}
\end{center}
\caption{Two views of the surface given by $a=4\ga$}
\label{fig1}
\end{figure}
The edges of the \ll slit\rr $(-0.5,0]$ are mapped to the short (lighter) edge at the top right of the first picture (bottom of the second picture); the singularity at the origin is clearly visible in the middle of this edge.

We conclude with some observations about this surface.

\noindent{\em Global smoothness}

This is the most important property predicted by mirror symmetry (and confirmed by our analysis): when $a=4\ga$,  the complex extended solution $\ga_0^{-1}L$  lies entirely within a single Iwasawa orbit of the loop group $\La SU_{1,1}$, and therefore the extended frame (and the surface) has no singularities.  In contrast, Figure \ref{fig2} illustrates what happens for other values of $a$.  In general one expects singularities where the complex extended solution crosses from one open Iwasawa orbit to the other.  According to Theorem 4.2 of 
\cite{BrRoSc10}, there are situations where the surface remains continuous but has cuspidal edges at the singular points.  It seems likely that the surfaces in Figure \ref{fig2} are of this type.
\begin{figure}[ht]
\begin{center}
\includegraphics[scale=0.26]{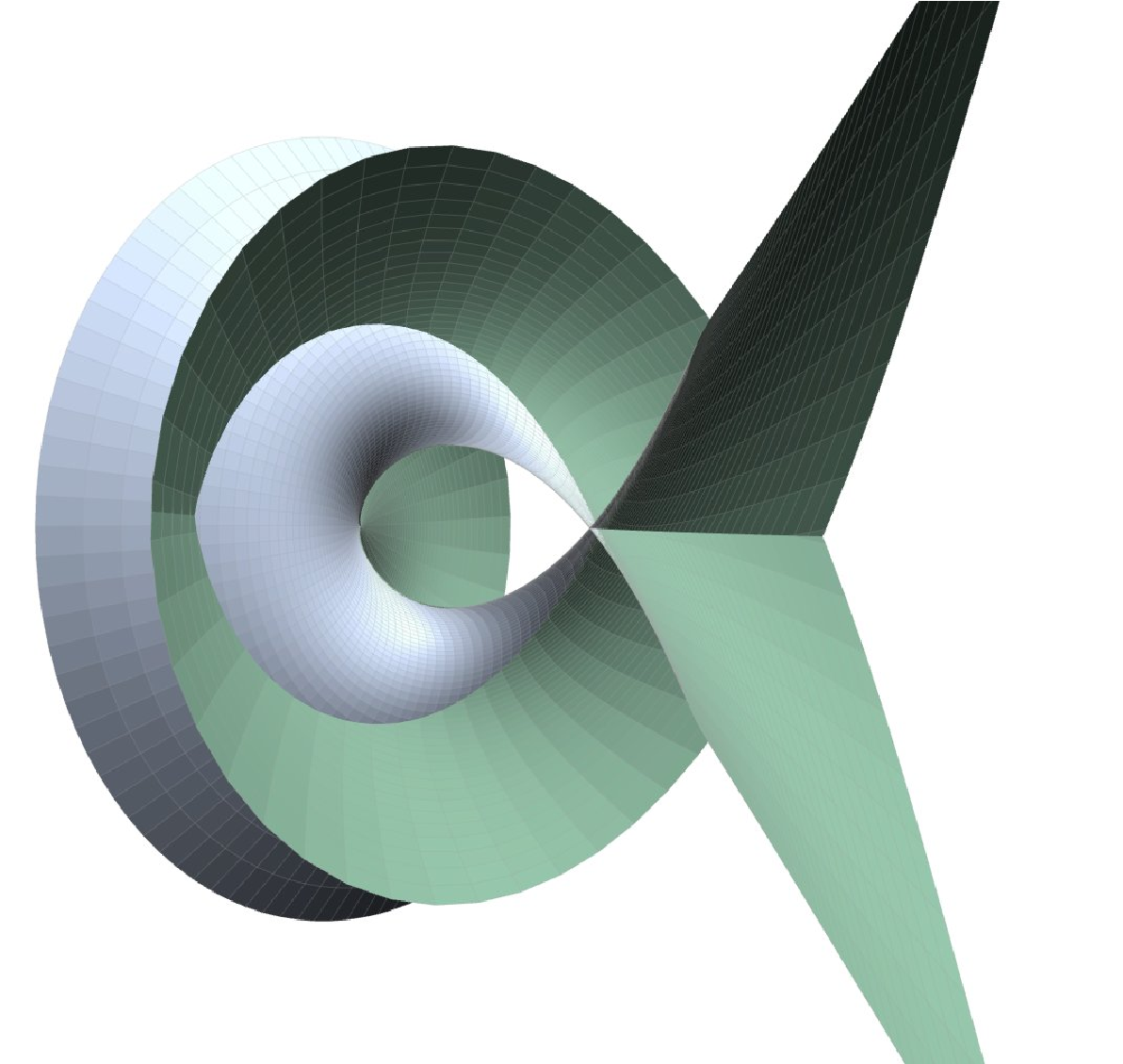}
\includegraphics[scale=0.26]{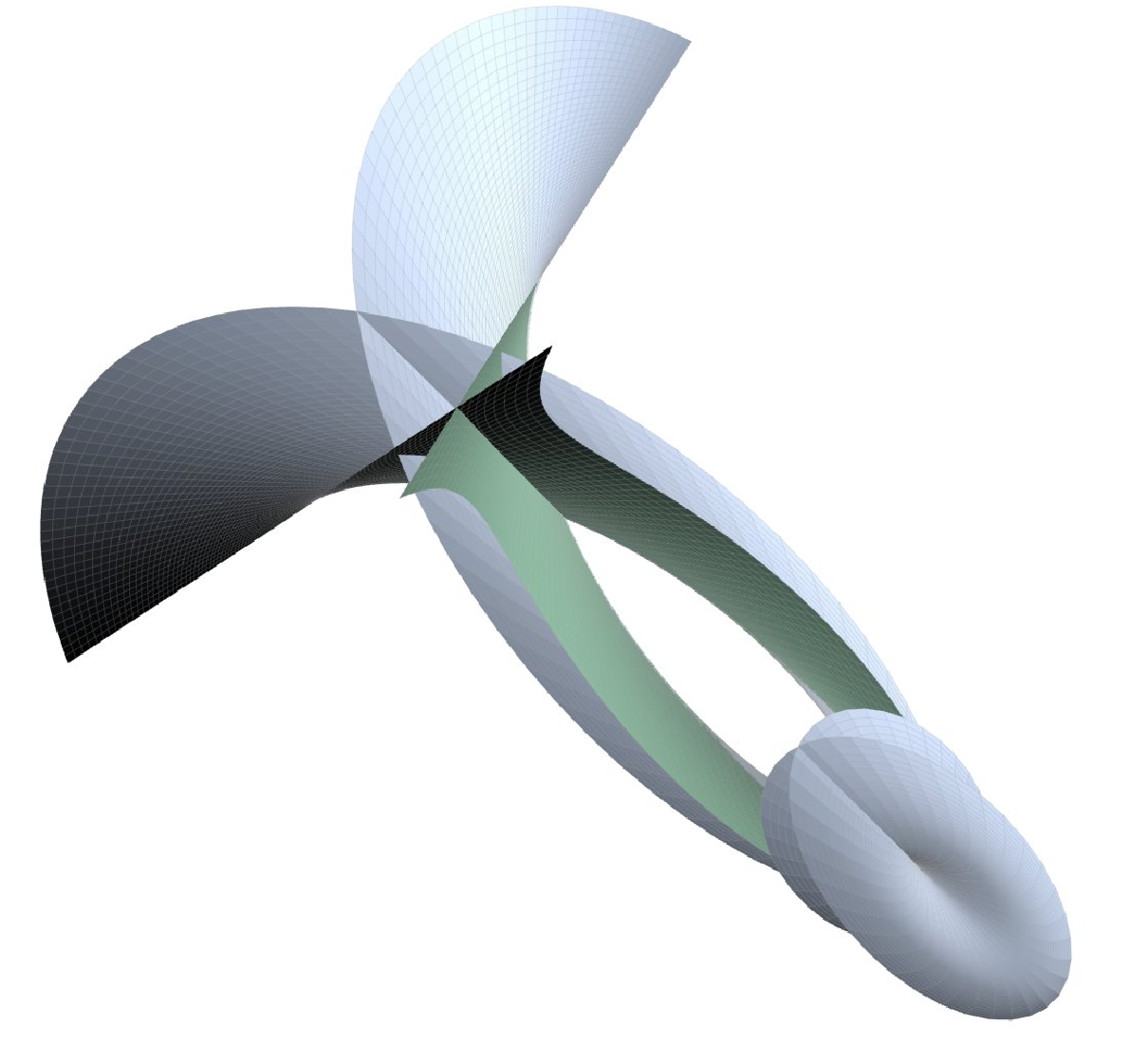}
\end{center}
\caption{The surfaces given by $a=1$ (left) and $a=4$ (right)}
\label{fig2}
\end{figure}

All of these surfaces have a reflective symmetry, due to the fact that the coefficients of $\ga_0^{-1}L$ are real, i.e.\
$\ga_0^{-1}(\la) L(z,\la) = 
\overline{\ga_0^{-1}(\bar \la) L(\bar z,\bar \la)}$.
The timelike axis lies in the plane of this reflection.

\noindent{\em Non-completeness}

From the proof of Corollary \ref{asymp}, we know that 
$e^u\sim -2(a+2\log r)$ for $r$ close to zero.  
Consider the curve in the surface parametrized by the curve $(e^{-t},0)$, $t \in (t_0,\infty)$ in the domain.  
Taking $t_0$ sufficiently large, we find that the length of the curve is 
\[ 
\int_{t_0}^\infty \sqrt{(-e^{-t})^2+0^2} 
\, 2e^u dt 
\sim 
\int_{t_0}^\infty 2e^{-t} (-2 a + 4 t) dt, 
\] 
which is finite. It follows that the surface is not complete.  

\noindent{\em Non-closing} 

The graphics (and the presence of the logarithm in $L$) suggest that none of the surfaces, for any value of $a$, are likely to be well-defined on any annulus  of the form $0< \vert z\vert < \eps$.  
In fact, it can be proved (see \cite{DoRoXX}) that no surface in the dressing orbit of the surface associated to $L$ can be well-defined on any annulus  of this form. 

\section{Related results and generalizations}\label{comments}

\noindent{\em The Grassmannian model interpretation of Iritani}

From our discussion at the end of section \ref{qcoh}, it is clear that the quantum cohomology of any manifold possesses a local $tt^\ast$ structure in a neighbourhood of any point where the map $L$ is defined (and, more generally, this holds for any Frobenius manifold).  This is an immediate consequence  of the Iwasawa decomposition.  
However, the existence of a $tt^\ast$ structure near the \ll  large radius limit point\rr  $z=0$ is not immediately obvious.  Our Theorem \ref{main} gives such structures for $\C P^1$, and the proof gives a general criterion for the existence of such structures; cf.\  the criterion given by Iritani (Theorem 1.1 and Proposition 3.5 of \cite{IrXX}).

Iritani uses the infinite dimensional \ll Grassmannian model\rr of the homogeneous space
$(\La SL_2\C)_\si/(\La_+ SL_2\C)_\si$.  As mentioned in the introduction, this exhibits the $tt^\ast$ structure as an infinite dimensional generalization of a variation of polarized Hodge structure.  The Grassmannian (or, more precisely, flag manifold) is constructed using \ll semi-infinite\rr subspaces of the Hilbert space $H^{(2)} = L^2(S^1,\C^2)$.  Iritani considers real structures on this complex Hilbert space, given by conjugate-linear involutions 
$\kappa:H^{(2)}\to H^{(2)}$ satisfying certain properties.  The involution singled out by Iritani (using K-theory) in section 5 of \cite{IrXX} is given by
\[
\kappa_{\calH}(x)=
\bp
\la & 0 \\
-4\ga & -1/\la
\ep
\bar x
\]
on elements $x\in H^{(2)}$.
This induces the following involution on the loop group $(\La SL_2\C)_\si$:
\begin{align*}
\kappa_{\calH}(A)(\la)&=
\bp
\la & 0 \\
-4\ga & -1/\la
\ep\bar A(1/\bar\la)
\bp
\la & 0 \\
-4\ga & -1/\la
\ep^{-1}
\\
&=
\bp
\la & 0 \\
-4\ga & -1/\la
\ep
P\,C(A)(\la)P
\bp
\la & 0 \\
-4\ga & -1/\la
\ep^{-1}\!\!\!\!\!,
\ \ P=
\bp
0 & 1 \\
1 & 0
\ep
\\
&=
\bp
4\ga & \la \\
  -1/\la & 0
\ep^{-1} 
C(A)(\la)
\bp
4\ga & \la \\
  -1/\la & 0
\ep
\end{align*}
where $C$ is the involution (given earlier in formula (\ref{real})) which defines the real form  $(\La SU_{1,1})_\si$.  The involution $\kappa^\tau_{\calH}$ of \cite{IrXX}
 is then given in our notation by 
$\kappa^\tau_{\calH}=Z^{-1} C Z$, where $Z$ was defined in formula (\ref{Zdef}). Thus, Iritani's modification of the standard real structure $C$ is equivalent to our modification of  $L$ by $\ga_0$.

\bigskip
\noindent{\em CMC surfaces in $\R^3$}

Although they do not arise from quantum cohomology, the surfaces with holomorphic data $p_1=1$, $p_2=z^n$, $n\in\Z$,  are natural generalizations of the case $n=-1$ that we have studied here, and we shall discuss them elsewhere. 
We just make some brief comments here
on the analogous CMC surfaces in $\R^3$, i.e.\ with the same holomorphic data but using $SU_2$ instead of $SU_{1,1}$.

For $n\ge 0$, these are the well known Smyth surfaces, and they are also \ll globally smooth\rr (see \cite{Sm93}). 
It was shown in \cite{BoIt95} that the Iwasawa factorization method greatly simplifies the original differential equation arguments of \cite{Sm93}.   In fact no differential equation arguments are needed at all:
the Iwasawa decomposition has only one orbit,  as $SU_2$ is compact,  so the Iwasawa factorization $L=FB$ is possible on the whole domain of $L$.  Moreover, the frame and surface are smooth at $z=0$.  It follows that the holomorphic data 
$p_1=1$, $p_2=z^n$ gives (for $n\ge 0$) a  CMC surface in $\R^3$ which is globally smooth on $\C$.  

Bobenko and Its point out that the Gauss-Codazzi equations in this case reduce to a version of the sinh-Gordon equation ($w_{z\bar z}=-\sinh w$  instead of $w_{z\bar z}=\sinh w$),  whose radially invariant reduction is again a special case of the \PP\  equation.  They use the Iwasawa factorization to deduce explicit connection formulae for this \PP\  equation.  In contrast, in the case $SU_{1,1}$, we must use results on the \PP\  equation to show that our Iwasawa factorization can be carried out globally.  

The CMC surfaces in $\R^3$ with $n<0$ have so far not been considered by differential geometers. However, it is interesting to note that, in the case $n=-1$, it is not possible to find any $\ga_0$ for which the analogue of  Theorem \ref{main} holds (the conditions on the coefficients $a,b,c,d$ of the matrix in the proof of Lemma \ref{h} cannot be satisfied).

\bigskip
\noindent{\em The quantum cohomology of $T^2$}

There is one other manifold whose quantum cohomology gives rise to a spacelike CMC surface in $\R^{2,1}$,  namely the torus $T^2=S^1\times S^1$.  The appropriate quantum cohomology algebra is that part of the small quantum cohomology algebra which is generated by $H^2 T^2$,  and
this is simply $\C[b,q]/(b^2)$ (the quantum product is equal to the cup product).  The quantum differential operator is simply the operator $(\la \b)^2$.  This gives the DPW potential
\[
\tfrac1\la N \tfrac{dq}q=
\tfrac1\la
\bp
0 & 0\\
1 &  0
 \ep \tfrac{dq}q,
 \]
which has canonical extended solution $L=e^{tN/\la}$.
The calculations of section \ref{homogeneity} apply to this case, but are much easier.  Apart from the very simple nature of $L$, there are two further simplifying factors: (i) the homogeneity condition $\tilde f=T^{-1} f T$ is vacuous, as
$T(\eps)=\diag(1,1)$, and (ii) the generalized Weierstrass representation of section \ref{cmc} does not apply directly when $H=0$, but it can be replaced by the ordinary Weierstrass representation.
(Spacelike surfaces in $\R^{2,1}$ with $H=0$ are called maximal surfaces.  A Weierstrass representation for such surfaces was given in \cite{Ko83}; this is analogous to the usual one for minimal surfaces, i.e.\ surfaces  in $\R^3$ with $H=0$.  For treatments of minimal surfaces in the loop group context we refer to \cite{DoPeTo97} and section 4.5 of \cite{Gu08}.  Maximal surfaces may be dealt with in the same way.)

A CMC surface is determined by its Gauss map only when $H\ne 0$, so the  quantum cohomology of $T^2$ does not determine a canonical maximal surface, but rather the set of all such surfaces which have the  Gauss map 
$z\mapsto e^{tN}
\bsq
1\\0
\esq
=
\bsq
1\\ \log z
\esq
$.  
This is a holomorphic map into $\C P^1$, but it does {\em not} define a \ll global\rr $tt^\ast$ structure because the image of the map is not contained in the unit disk.  

There is another aspect of the quantum cohomology of $T^2$, which does lead to a $tt^\ast$ structure, and this should be regarded as the correct analogue of what we did for
the quantum cohomology of $\C P^1$.
Namely, because of the mirror symmetry phenomenon,
the quantum differential operator $(\la q\b/\b q)^2$ arises from the operator
\[
(\la\b)^2 - 3z\la^2(3\b + 2)(3\b + 1),\quad
\b=z\tfrac{\b}{\b z}
\]
after a certain change of variable $q=q(z)$ known as the mirror transformation.  This can be analyzed in the same way as the operator  $(\la q\b/\b q)^2 - q$  (in the latter case the mirror transformation is trivial: $q=z$).  We shall just state the results here, referring to Examples 10.11, 10.14, 10.16
 of \cite{Gu08} 
(and the references listed there) for the details.  

First, the DPW potential turns out to be
\[
\tfrac1\la
\bp
0 & 0\\
\b(\tilde\phi/\phi) &  0
 \ep \tfrac{dz}z,
 \]
where $\phi,\tilde\phi$ are the natural solutions given by the Frobenius method of the above o.d.e.\ in a neighbourhood of the regular singular point $z=0$.  We have
\begin{align*}
\phi&=f_0,  \ \ f_0(0)=1,\\
\tilde\phi&=\tfrac1\la f_0\log z + f_1, \ \  f_1(0)=0,
\end{align*}
where $f_0,f_1$ are holomorphic in a neighbourhood of  $z=0$.  This gives
\[
L=
\bp
\phi  &  \la\b \phi \\
\tilde\phi  &  \la\b\tilde\phi
\ep
\\
=
\bp
1 & 0\\
\tfrac1\la \log z & 1
\ep
\bp
f_0 &  \la\b f_0 \\
f_1  &  f_0+\la\b f_1
\ep
\ \overset{\text{def}}{=}\ 
e^{tN/\la}\,
L_0,
\]
which is similar to the case of $\C P^1$, except that the formulae for $f_0,f_1$  (hence $L_0$) are different.  The mirror transformation which converts back to the trivial potential 
$\tfrac1\la N \tfrac{dq}q$ is $q=e^{\tilde\phi(z)/\phi(z)}$.

The Gauss map of any associated maximal surface is the 
multi-valued holomorphic map $\tilde\phi/\phi$, and it is well known that the image of this map is the unit disk (actually a hemisphere of the Riemann sphere). Thus we obtain a $tt^\ast$ structure defined globally on the universal cover of $\C^\ast$. This is the well known variation of Hodge structure of an elliptic curve.

\bigskip
\noindent{\em The quantum cohomology of projective spaces and weighted projective spaces}

According to Cecotti and Vafa, the quantum cohomology of any complex projective space $\C P^n$ is expected to behave in a similar way to the case $n=1$; it gives a solution of the radially symmetric affine Toda equations (a system of $n$ ordinary differential equations).
Our method certainly applies to this case, and more generally to  the orbifold quantum cohomology of any weighted complex projective space $P(w_0,\dots,w_n)$, and in every case it gives a local $tt^\ast$ structure near $z=0$.  However, we do not know\footnote{We have mentioned the recent results of \cite{GuLiXX} for the cases $\C P^3,\C P^4$ in an earlier footnote. As will be shown in a future publication, the methods of \cite{GuLiXX} can, in fact, be extended to verify the prediction of Cecotti and Vafa for any $\C P^n$.} of any global result for $\C P^n$, except  in the case $n=2$ where the o.d.e.\ reduces to a \P equation as in the case $n=1$.  
The harmonic map given by $\C P^n$ or $P(w_0,\dots,w_n)$ can be described more appropriately as a primitive map into a $k$-symmetric space (for some $k$).  It is well known  (see, for example, chapter 21 of \cite{Gu97}) that such maps correspond to solutions of affine Toda equations.  However, the particular solutions which arise here apparently have not been considered by differential geometers.

{\em

\noindent
Zentrum Mathematik  \newline
Technische Universit{\"a}t M{\"u}nchen \newline
Boltzmannstrasse 3\newline
D-85747 Garching\newline
GERMANY

\noindent
Department of Mathematics and Information Sciences\newline
   Faculty of Science and Engineering\newline
   Tokyo Metropolitan University\newline
   Minami-Ohsawa 1-1, Hachioji, Tokyo 192-0397\newline
   JAPAN

\noindent
Department of Mathematics\newline
   Faculty of Science\newline
   Kobe University\newline
   Rokko, Kobe 657-8501\newline
   JAPAN

}

\end{document}